\newtheorem{theorem}{Theorem}[section]
\newtheorem{remark}[theorem]{Remark}
\newtheorem{lemma}[theorem]{Lemma}
\newcommand{\cV}{S}
\newcommand{\cI}{I}
\newcommand{\cv}{x}
\newcommand{\cu}{y}
\newcommand{\cA}{\mathcal{A}}
\newcommand{\cB}{\mathcal{B}}
\newcommand{\cM}{M}
\newcommand{\cQ}{S}
\newcommand{\cT}{\mathcal{T}}
\newcommand{\ctQ}{\tilde{\mathcal{Q}}}
\newcommand{\cxi}{\xi}
\newcommand{\cS}{\mathcal{S}}%Self-stabilization
\newcommand{\cPr}{\mathbb{P}}
\newcommand{\E}{\mathbb{E}}
\newcommand{\NI}{Y}
\newcommand\set[1]{\ensuremath{\{#1\}}}
\newcommand\bigpar[1]{\bigl(#1\bigr)}
\newcommand\Bigpar[1]{\Bigl(#1\Bigr)}
\newcommand\biggpar[1]{\biggl(#1\biggr)}
\newcommand\bigsqpar[1]{\bigl[#1\bigr]}
\newcommand\Bigsqpar[1]{\Bigl[#1\Bigr]}
\newcommand\biggsqpar[1]{\biggl[#1\biggr]}
\newcommand\bigcpar[1]{\bigl\{#1\bigr\}}
\newcommand\bigabs[1]{\bigl|#1\bigr|}
\def\rompar(#1){\textup(#1\textup)}    % usage: \rompar(...)
\def\xexp(#1){e^{#1}}
\newcommand\ceil[1]{\lceil#1\rceil}
\newcommand\bigfloor[1]{\bigl\lfloor#1\bigr\rfloor}
\newcommand{\refT}[1]{Theorem~\ref{#1}}
\newcommand{\refL}[1]{Lemma~\ref{#1}}
\newcommand{\refR}[1]{Remark~\ref{#1}}
\newcommand{\refS}[1]{Section~\ref{#1}}
\newcommand{\indic}[1]{\mathbbm{1}_{\{{#1}\}}}
\newcommand{\Bin}{\operatorname{Bin}}
\newcommand{\cpD}{D^{\frac{1}{r-1}}} % 1/(r-1) th power of D, frequently used
\newcommand{\cmD}{D^{-\frac{1}{r-1}}}
\newcommand{\cN}{\mathcal{N}} %neighbor of a point
\newcommand{\cF}{\mathcal{F}} %natural event
\newcommand{\tm}{m} %total number of steps
\newcommand{\cGo}{\mathcal{G}} %good event
\newcommand{\err}{e}
\newcommand{\cf}{f}
\newcommand{\lbc}{c}
\newcommand{\ubC}{C}
\newcommand{\biC}{C'}
\renewcommand\P{\operatorname{\mathbb P{}}}
\renewcommand{\emptyset}{\varnothing}
\newcommand\noproof{\qed}
	\def\MR#1{}
\let\OLDthebibliography\thebibliography
\renewcommand\thebibliography[1]{
  \OLDthebibliography{#1}
  \setlength{\parskip}{0pt}
  \setlength{\itemsep}{0pt plus 0.3ex}
}
\title{On the power of random greedy algorithms} 
\author{He Guo%
\thanks{Faculty of Mathematics, Technion, Haifa~32000, Israel. E-mail: {\tt hguo@campus.technion.ac.il}.}
\ and Lutz Warnke%
	\thanks{Department of Mathematics, University of California, San Diego, La Jolla CA~92093, USA. 
E-mail: {\tt lwarnke@ucsd.edu}. 
Supported by NSF~grant DMS-1703516, NSF~CAREER grant~DMS-1945481, and a Sloan Research Fellowship.}
}
\date{April~15, 2021; revised April~3, 2022}
\begin{document}
\maketitle
\begin{abstract}
In this paper we solve two problems of Esperet, Kang and Thomass\'{e} as well as~Li 
concerning \mbox{(i)~induced} bipartite subgraphs in triangle-free graphs and (ii)~van der Waerden numbers. 
Each time \mbox{random~greedy~algorithms} allow us to go beyond the \mbox{Lov\'asz~Local~Lemma} or \mbox{alteration~method} used in previous work, 
illustrating the power~of the algorithmic~approach to the probabilistic~method. 
\end{abstract}

\section{Introduction}
The probabilistic method is a widely used tool for proving the existence of hard-to-construct mathematical objects with certain desirable properties: 
it works by showing that a randomly chosen object has the desired properties with non-zero probability.
In classical textbook approaches to the probabilistic method, 
the underlying random objects are typically generated in a \emph{static} way, 
e.g., by choosing a graph uniformly at random from a prescribed class of graphs,
or by independently including each possible edge.

In this paper we illustrate the power of the algorithmic approach to the probabilistic method, 
where the random objects are generated step-by-step in a \emph{dynamic} way using a randomized algorithm. 
To this end we consider two examples from graph theory and additive combinatorics, 
and show that each time %the objects created by a 
random greedy algorithms allow us to 
go beyond classical applications of the probabilistic~method,  
i.e., prove existence of mathematical objects with better properties. %; see Sections~\ref{sec:TrInduced}--\ref{sec:intr:vdW}.
These algorithmic improvements are key for (i)~resolving a problem of Esperet, Kang and Thomass\'{e}~\cite{esperet2019separation}, 
and (ii)~answering a question of Li~\cite{yushenglitalk},  
see Theorems~\ref{thm:fnd}~and~\ref{thm:van}. % in Sections~\ref{sec:TrInduced} and~\ref{sec:intr:vdW}. %. 

For the two combinatorial examples considered in this paper, 
previous work used the probabilistic method to show that static random objects can avoid certain forbidden substructures, % (triangles or arithmetic progressions),
while maintaining other desired pseudo-random properties. 
Our technical results show that random greedy algorithms, 
which by construction avoid these forbidden substructures, 
create objects with \mbox{superior} pseudo-random properties, see Theorems~\ref{thm:semi} and~\ref{thm:AP}. % in Sections~\ref{sec:techn:TrInduced} and~\ref{sec:techn:vdW}. 
With the benefit of hindsight, earlier work 
of R\"{o}dl~\cite{rodl1985packing}, Kahn~\cite{kahn1996asymptotically}, Wormald~\cite{wormald1995}, Spencer~\cite{spencer1995}, Kim~\cite{kim1995ramsey}, Bohman~\cite{bohman2009triangle}, and others~\mbox{\cite{bennett2016note,bohman2019large,Glock2020conjecture}} 
can be interpreted similarly. 
This paper thus reveals the following emerging algorithmic~paradigm:  
one can often take proofs based on classical probabilistic~method arguments, 
and obtain improvements by using an algorithmic approach to the probabilistic~method.

\subsection{Induced bipartite subgraphs in triangle-free graphs}\label{sec:TrInduced}
Our first example is from extremal graph theory, concerning a local refinement of the famous Max Cut problem. % which asks for the largest bipartite subgraph. 
Here the history starts in~1988, when Erd\H{o}s, Faudree, Pach and Spencer~\cite{erdos1988make} 
introduced the problem of searching for large induced bipartite subgraphs in triangle-free graphs. 
Around~2018 Esperet, Kang and Thomass\'{e}~\cite{esperet2019separation} further refined this problem, focusing on induced bipartite subgraphs with large minimum degree.
More precisely, for fixed~$\eta \in (0,1)$ they asked to determine the behavior of the parameter~$f_\eta(n)$, 
which is defined as the maximum~$f$ such that every $n$-vertex triangle-free graph with minimum degree at least~$n^{\eta}$ contains an induced bipartite subgraph with minimum degree at least~$f$. 
Recent results of Kwan, Letzter, Sudakov and Tran~\cite{kwan2020dense} and Cames van Batenburg, de Joannis de Verclos, Kang and Pirot~\cite{van2020bipartite} show~that 
\begin{equation}\label{eq:feta}
f_\eta(n) = \Theta\bigl(\max\bigl\{\log n,n^{2\eta-1}\bigr\}\bigr) \qquad \text{ for fixed~$\eta \in (0,1) \setminus (1/2,2/3]$,}
\end{equation}
and also determine $f_\eta(n)$ up to logarithmic factors in the remaining range~$\eta \in (1/2,2/3]$. 
Illustrating the  conceptual punchline of this paper, 
we use a `dynamic' randomized greedy algorithm to improve existing upper bound constructions~\cite{kwan2020dense,van2020bipartite}, 
which were based on classical probabilistic method tools applied to the binomial random graph~$G(n,p)$. 
This algorithmic improvement allows us to close the logarithmic gap for~$\eta \in (1/2,2/3]$, and determine the order of magnitude of $f_\eta(n)$ for any fixed~$\eta \in (0,1)$. 
The following result in particular resolves~\cite[Problem 4.1]{esperet2019separation} of Esperet, Kang and Thomass\'{e} up to constant~factors.
\begin{theorem}\label{thm:fnd}
For fixed~$\eta \in (0,1)$, we have $f_\eta(n)=\Theta(\max\{\log n,n^{2\eta-1}\})$.  
\end{theorem}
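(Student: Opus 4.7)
The plan is to split the statement into matching upper and lower bounds and exploit what is already in the literature, so that the only new contribution is closing the logarithmic gap in the range $\eta\in(1/2,2/3]$.

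First, I would dispatch the lower bound $f_\eta(n)=\Omega(\max\{\log n,n^{2\eta-1}\})$ for all fixed $\eta\in(0,1)$ by quoting the existential results of Kwan, Letzter, Sudakov and Tran~\cite{kwan2020dense} and Cames van Batenburg, de Joannis de Verclos, Kang and Pirot~\cite{van2020bipartite}, which are already known to give this lower bound throughout the whole interval. Similarly, the upper bound $f_\eta(n)=O(\max\{\log n,n^{2\eta-1}\})$ outside the critical range, i.e., for $\eta\in(0,1)\setminus(1/2,2/3]$, is already contained in~\eqref{eq:feta}. So the real work consists only in producing, for each fixed $\eta\in(1/2,2/3]$, a triangle-free $n$-vertex graph $G$ with $\delta(G)\ge n^\eta$ all of whose induced bipartite subgraphs have minimum degree $O(n^{2\eta-1})$.

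For that construction I would run the random greedy triangle-free process on $[n]$ and stop at the moment when the minimum degree first reaches $\Theta(n^\eta)$, which occurs before the natural end of the process since $\eta\le 2/3<1/2+o(1)$ is below the typical density $n^{1/2+o(1)}$ that the process attains. The heart of the argument is the pseudo-randomness output of this process, which I would invoke as a black box from Theorem~\ref{thm:semi}: the resulting graph $G$ is triangle-free, has edge density $p=\Theta(n^{\eta-1})$, and for every two disjoint vertex subsets $A,B$ of size at least $n^{2\eta-1}/p=n^\eta$, the bipartite density $e_G(A,B)/(|A||B|)$ is $(1+o(1))p$. This is the crucial strengthening over the $G(n,p)$-plus-alteration constructions, which only control such densities with an extra polylogarithmic slack coming from the alteration step.

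Given such a $G$, the verification that no induced bipartite subgraph has large minimum degree proceeds by a short counting argument. Suppose $H=G[A\cup B]$ is induced bipartite with parts $A,B$ and minimum degree $d$. Both $A$ and $B$ must be independent sets of $G$; a double-edge count then forces $|A|,|B|\ge d/p$. If $d$ is significantly larger than $Cn^{2\eta-1}$ for a sufficiently large constant $C$, then $|A|,|B|\ge n^\eta$, so Theorem~\ref{thm:semi} yields $e_G(A,B)\le 2p|A||B|$; but induced minimum degree $d$ forces $e_G(A,B)\ge d\cdot\max(|A|,|B|)\ge(d/p)\cdot d\cdot|B|/\max(|A|,|B|)$, which after comparison produces $d\le 2p\cdot \min(|A|,|B|)\le 2p\cdot|A|$, combined with $d/p\le|A|$ it is straightforward to derive a contradiction for $d\gg n^{2\eta-1}$. (Cases where one of $|A|,|B|$ is borderline or $d$ is only of order $\log n$ are handled by the standard union bound over independent sets of size $\Theta(\log n)$, which rules out complete bipartite subgraphs of that size.)

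The main obstacle is therefore not the deduction of Theorem~\ref{thm:fnd} from Theorem~\ref{thm:semi}, which is essentially a two-line bipartite density calculation, but rather the pseudo-randomness statement Theorem~\ref{thm:semi} itself: one must show that the random greedy triangle-free process tracks the densities $e(A,B)\sim p|A||B|$ uniformly over all reasonably sized pairs $A,B$ up to the stopping time corresponding to minimum degree $n^\eta$. I would expect this to be the technically demanding part of the paper, handled via a differential-equation-method analysis with concentration obtained through a carefully chosen martingale or Freedman-type argument, mirroring the framework used by Bohman~\cite{bohman2009triangle} and refined in subsequent works cited in the introduction.
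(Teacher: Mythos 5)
Your reduction to the critical range $\eta\in(1/2,2/3]$ and the decision to quote the existing literature for the other cases match the paper. But the construction you propose for the critical range has a fatal flaw that the paper specifically designs around.

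You propose to run the triangle-free process on $[n]$ and ``stop when the minimum degree first reaches $\Theta(n^\eta)$,'' asserting this happens before the process ends because ``$\eta\le 2/3 < 1/2+o(1)$.'' This inequality is false: $2/3 > 1/2$, and the triangle-free process on $n$ vertices terminates with degrees $\Theta(\sqrt{n\log n}) = n^{1/2+o(1)}$, which for $\eta$ close to $2/3$ is far below $n^\eta$. The process simply never reaches minimum degree $n^{2/3}$ on an $n$-vertex host. Consistently with this, Theorem~\ref{thm:semi} only delivers a triangle-free graph on $\Theta(n)$ vertices with degrees $\Theta(\sqrt{\beta n\log n})$ --- a fixed density, not a tunable $\Theta(n^{\eta})$.

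The paper's fix, which your proposal is missing, is to decouple the number of vertices at which the process is run from the number of vertices of the target graph. One first constructs $G_{n'}$ on $n'$ vertices via Theorem~\ref{thm:semi}, then either takes a disjoint union of $\lfloor n/n'\rfloor$ copies (for the range $\sqrt{n}\le d\le A\sqrt{n\log n}$) or blows up each vertex of $G_{n'}$ into an independent set of size $\lfloor n/n'\rfloor$ (for $A\sqrt{n\log n}\le d\le n^{2/3}$). A blow-up by factor $\lfloor n/n'\rfloor$ multiplies the minimum degree by $\lfloor n/n'\rfloor$ while preserving triangle-freeness, and it transforms the induced-bipartite minimum-degree bound $O(\log n')$ of Theorem~\ref{thm:semi} into $\lfloor n/n'\rfloor\cdot O(\log n') = O(d^2/n)$ once $n'$ is set to $\Theta((n/d)^2\log n)$. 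Without this step there is no way to realize the minimum-degree requirement $n^\eta$ for $\eta$ near $2/3$.

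Secondarily, you misquote Theorem~\ref{thm:semi}: it is not a statement about bipartite densities $e(A,B)/(|A||B|) = (1+o(1))p$ for all large $A,B$, but rather the cleaner statement that every induced bipartite subgraph $F\subseteq G_n$ has $\delta(F)\le C'\log n$. The density-concentration events ($\mathcal{T}_I^*$, $\mathcal{T}_I^+$) appear inside the proof of Theorem~\ref{thm:semi}, not in its statement, and it is the induced-bipartite-degree bound that actually feeds into the final construction.
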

In comparison to the previous upper bounds~\cite{kwan2020dense,van2020bipartite} based on 
the probabilistic analysis of~$G(n,p)$ via the alteration method or the Lov\'asz Local Lemma,  
our key improvement stems from the fact that via the so-called semi-random triangle-free process we are able to algorithmically construct pseudo-random triangle-free graphs with higher edge density (see~\refT{thm:semi}), 
confirming speculations from~\mbox{\cite[Section~4]{esperet2019separation}} and~\mbox{\cite[Section~3]{van2020bipartite}}. 
%
%That the parameter~$f_\eta(n)$ undergoes a phase transition at~$\eta=1/2$ is heuristically related to the fact that the typical structure of triangle-free graphs changes when the number of edges is close to~$n^{3/2}$, see~\cite{OPS,ST,DemK,B}. 

\subsection{Van der Waerden numbers}\label{sec:intr:vdW}
Our second example is from additive combinatorics, concerning a well-known Ramsey-type parameter for arithmetic progressions. 
The van der Waerden number~$W(r,k)$ is defined as the smallest integer~$n$ such that every red and blue coloring of numbers in~$[n]:=\{1,2,\dots, n\}$ contains either a monochromatic red %(non-trivial)
 $r$-term arithmetic progression ($r$-AP) or a monochromatic blue $k$-AP. 
The celebrated van der Waerden's theorem guarantees that~$W(r,k)$ is finite for all integers $r,k \ge 2$, 
making it a natural and interesting problem to determine the asymptotic behavior of~$W(r,k)$, see~\cite{Gowers2001new,Green2002AP}. 
%For the `diagonal' case where $k=r$ the best-known bounds of the approximate form $W(k,k) = \Omega(k 2^k)$ and $W(k,k) \le 2^{2^{2^{2^{2^{2^{k+9}}}}}}$ follow from results of Berlekamp~\cite{Berlekamp} and Gowers~\cite{Gowers}.  
%Note that~$w(2,k) = 2k$ if $k$ is odd and~$W(2,k)=2k-1$ otherwise.
The `off-diagonal' case, where~$r \ge 3$~is fixed and~$k$~tends to infinity, was of particular interest to Graham 
(note that~$W(2,k)=\Theta(k)$ holds trivially). 
Indeed, in the mid~2000s Graham conjectured that~$W(3,k) \le k^{O(1)}$, and mentioned that numerical evidence suggests~$W(3,k)= k^{2+o(1)}$, see~\mbox{\cite{graham2006growth,graham2015rudiments,Green2010note}}.
Around~2015 Graham even started offering a~\$250 reward for his conjecture, see~{\cite[p.~19]{graham2015rudiments}}. 
In terms of lower bounds, in~2008 Li and Shu~\cite{li2010lower} showed that 
\[
W(r,k)=\Omega\bigpar{(k/\log k)^{r-1}} \qquad \text{ for fixed~$r \ge 3$,}
\] 
by applying the Lov\'asz Local Lemma to a random subset of the integers~$[n]$. % (and we checked that this can also be proved using the alteration method). 
Subsequently, Li raised in~2009 the natural question~\cite{yushenglitalk} whether this probabilistic lower bound can be improved via a randomized greedy algorithm that `dynamically' constructs an $r$-AP free subset of the integers~$[n]$.
The proof of the following theorem answers Li's question affirmatively, see also Sections~\ref{sec:techn:vdW}~and~\ref{sec:van}. % as well as~\refT{thm:AP}.  
\begin{theorem}\label{thm:van}
For fixed~$r\ge 3$, we have $W(r,k)=\Omega\big( k^{r-1}/(\log k)^{r-2}\big)$. 
\end{theorem}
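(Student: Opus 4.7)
The plan is to prove the lower bound by the standard two-coloring approach. For $n$ just below the target $c_r k^{r-1}/(\log k)^{r-2}$, we exhibit a red/blue coloring of $[n]$ with no red $r$-AP and no blue $k$-AP. Taking the red class to be an $r$-AP-free set $S\subseteq [n]$ and the blue class to be $[n]\setminus S$, the red condition is automatic by construction; it suffices to arrange that $S$ meets every $k$-AP. This gives $W(r,k)>n$ and hence the theorem.

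To build $S$, I would run the random greedy $r$-AP-free process on $[n]$: examine the integers in a uniformly random order and add each to $S$ unless doing so would complete an $r$-AP in $S$. The key input is \refT{thm:AP}, which (heuristically) shows this process produces a pseudo-random $S$ of density roughly $p = n^{-1/(r-1)}(\log n)^{1/(r-1)}$ and, more importantly, that with high probability every $k$-AP of $[n]$ intersects $S$. Conditioning on such a realization yields the desired coloring.

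Sanity check on the parameters: the expected number of elements of $S$ lying in a fixed $k$-AP is $\approx kp \approx k\,n^{-1/(r-1)}(\log n)^{1/(r-1)}$. One wants this to beat $C\log n$ so that, via standard concentration, the non-hitting probability is $o(n^{-2})$ and a union bound over the $\Theta(n^2)$ $k$-APs closes the argument. Rearranging (and using $\log n \asymp \log k$) yields exactly $n = c_r k^{r-1}/(\log k)^{r-2}$ with $c_r>0$ small. This also locates the source of improvement over the Li--Shu bound, which relied on a static random set of density only $\sim n^{-1/(r-1)}$ produced via LLL/alteration: the extra $(\log n)^{1/(r-1)}$ density afforded by greedy is raised to the $k$-th power inside the survival probability, producing precisely the $(\log k)^{r-2}$ gain.

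The main obstacle is \refT{thm:AP} itself. Showing that the greedy process simultaneously (i) attains the claimed density and (ii) has pseudo-randomness strong enough that \emph{every} $k$-AP is hit requires tracking the dynamics in the spirit of Bohman's work on the triangle-free process: the differential equations method to follow the densities of the relevant substructures (singletons, pairs of elements extendable to many $r$-APs, etc.), together with sharp martingale concentration to support the union bound over all $\Theta(n^2)$ $k$-APs. The correlations introduced by the $r$-AP-freeness rule, combined with the requirement of pseudo-randomness at the coarse scale of $k$-APs rather than purely local statistics, constitute the principal technical difficulty; once established, the reduction above is essentially immediate.
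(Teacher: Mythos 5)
Your proposal matches the paper's approach: reduce to constructing a pseudo-random $r$-AP-free set that hits every $k$-AP (\refT{thm:AP}), color that set red and its complement blue, and conclude $W(r,k)>n$ with $n=\Theta(k^{r-1}/(\log k)^{r-2})$; your heuristic calculation of the parameters and the source of the $(\log k)^{r-2}$ gain over Li--Shu also agrees with the paper's reasoning. The one detail you gloss over is that the paper runs the process in $\mathbb{Z}/N\mathbb{Z}$ for prime $N$ (needed for the Bohman--Bennett analysis underlying \refT{thm:AP}) and then transfers to $[n]$ with $n=N-1$ via Bertrand's postulate, noting that APs in $[n]$ remain APs in $\mathbb{Z}/N\mathbb{Z}$, rather than running the greedy process directly on $[n]$.
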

This result was announced in October~2020, see~\cite{guovdwtalk}.
While preparing this paper, 
Green~\cite{green2021new} made a breakthrough
and showed ${W(3,k) \ge k^{(\log k)^{1/3-o(1)}}}$ using very different techniques, 
which in view of ${W(r,k) \ge W(3,k)}$
disproves the earlier belief that ${W(r,k)=k^{O(1)}}$ for fixed~$r \ge 3$. 
The best known upper bound $W(3,k)\le \exp\bigl(k^{1-\Omega(1)}\bigr)$ was obtained by Schoen~\cite{schoen2020subexponential}.

\subsection{Organization}
In \refS{sex:techn} we state our main technical results, which will imply \refT{thm:fnd} and~\ref{thm:van} for induced bipartite subgraphs and van der Waerden numbers, respectively. 
In Sections~\ref{sec:gnd} and~\ref{sec:van} we then prove these technical results using an algorithmic approach to the probabilistic method, 
i.e., by analyzing randomized algorithms that construct pseudo-random triangle-free graphs and $r$-AP free subsets of the integers, respectively.

\subsection{Main technical results}\label{sex:techn}

\subsubsection{Construction of pseudo-random triangle-free graphs}\label{sec:techn:TrInduced}
%
%Recall that~$f_\eta(n)$ denotes the maximum~$f$ such that every $n$-vertex triangle-free graph with minimum degree at least~$n^{\eta}$ contains an induced bipartite subgraph with minimum degree at least~$f$. 
To prove the upper bound on the parameter~$f_\eta(n)$ claimed by \refT{thm:fnd} for~$\eta \in (1/2,2/3]$, 
our strategy is to construct a pseudo-random triangle-free graph~$G_n$ with $\Theta(n)$ vertices, 
where pseudo-randomness will intuitively ensure the desired minimum degree properties (in suitable constructions that are based on~$G_n$). 
Following the conceptual punchline of this paper, we shall construct the desired graph~$G_n$ using a semi-random variant of the triangle-free process, 
which is a randomized greedy algorithm that sequentially adds more edges to~$G_n$ without creating a triangle, see~\refS{sec:gnd} for the full details. 
This algorithmic approach to the probabilistic method is key for obtaining our improved upper bound on~$f_\eta(n)$ via the following auxiliary result, 
since earlier approaches based on the binomial random graph~$G(n,p)$ %(using the Lov\'asz Local Lemma or alteration method) 
were only able to prove a weaker version of \refT{thm:semi}, where the triangle-free graph~$G_n$ is sparser, i.e., with minimum and maximum degree bounds~$\delta(G_n),\Delta(G_n)=\Theta(\sqrt{n})$; see~\cite[Lemma~5.1]{kwan2020dense} and~\cite[Theorem~3.1]{van2020bipartite}. 
Note that we recover these earlier results, by sampling each edge of the graph~$G_n$ from~\refT{thm:semi} independently with probability~$1/\sqrt{\log n}$.
\begin{theorem}\label{thm:semi}
There are constants~$\lbc,\ubC,\biC>0$ such that for any~$0<\beta<1/14$ the following holds for any integer~$n \ge n_0=n_0(\beta)$. 
There exists a triangle-free graph~$G_n$ with $v(G_n) \in [n/3,n]$ vertices, 
	\begin{gather}\label{eq:maxmindegree}
	\lbc\sqrt{\beta n\log n}\le 	\delta(G_n)\le \Delta(G_n)\le \ubC\sqrt{\beta n\log n},
	\end{gather}
and the property that any induced bipartite subgraph~$F \subseteq G_n$ has minimum degree~$\delta(F) \le \biC\log n$. 
\end{theorem}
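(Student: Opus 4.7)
The construction uses a semi-random variant of the triangle-free process, run in discrete rounds $t=1,\dots,T$: starting from the empty graph on~$n$ vertices, in each round we independently propose each currently ``open'' non-edge (one whose insertion would not close a triangle with the edges already present) with a small probability $p_t\asymp 1/n$, and add to $G_n$ only those proposed edges that are pairwise compatible. We stop at the first round $T$ for which the predicted edge density reaches $\rho\asymp\sqrt{\log n/n}$, so that typical vertex degrees are of order $\sqrt{\beta n\log n}$, matching~\eqref{eq:maxmindegree}.

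The first block of work is the standard analysis of such semi-random processes via the differential equation method. Tracking the open degrees $Y_v(t):=|\{u:uv\text{ open at time }t\}|$ together with the relevant codegree counts, Freedman-type supermartingale concentration shows that, with probability $1-o(1)$, the trajectory of every vertex stays within a $(1\pm o(1))$ factor of the deterministic prediction throughout the process. Discarding the $o(n)$ exceptional vertices whose final degree falls outside $[\lbc,\ubC]\sqrt{\beta n\log n}$ leaves an induced subgraph on $\ge n/3$ vertices satisfying~\eqref{eq:maxmindegree}.

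The novel ingredient is the induced bipartite property. Since $F=G_n[V(F)]$ is induced bipartite with parts $A,B$, both $A$ and $B$ are independent in $G_n$. When $\min(|A|,|B|)\le\biC\log n$ the trivial bound $\delta(F)\le\min(|A|,|B|)$ already gives the result. Otherwise we aim to prove the following pseudo-random statement: with high probability, for every pair of disjoint $G_n$-independent sets $A,B$ with $\min(|A|,|B|)>\biC\log n$, \emph{some} vertex in $A\cup B$ has at most $\biC\log n$ neighbors on the opposite side. Together these give $\delta(F)\le\biC\log n$ unconditionally.

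The main obstacle is establishing this auxiliary statement. The strategy is a concentration-plus-union-bound argument carried out along the process: for each candidate pair $(v,S)$ with $v\in V(G_n)$ and $S$ a potential independent set, we track $|N_{G_n}(v)\cap S|$ as a supermartingale whose conditional mean is close to $|S|\rho$, and apply Freedman's inequality to obtain an exponential tail bound for the event $|N_{G_n}(v)\cap S|\ge\biC\log n$. A union bound over all such pairs, with cost $\exp(O(s\log(n/s)))$ for $|S|=s$, is tightest at the critical scale $s\asymp\sqrt{n\log n}$, precisely near the independence number of $G_n$; crucially, this is the very regime where the extra $\sqrt{\log n}$ factor in the edge density gained through the triangle-free process~--- unavailable through $G(n,p)$ without creating triangles~--- boosts the concentration exponent enough to beat the union bound cost. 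Handling the subtlety that ``$S$ is independent in $G_n$'' is itself a random event requires pre-committing to a sufficiently rich family of candidate sets (derived from crude pseudo-randomness properties maintained throughout the process) over which the union bound is then taken. This is the algorithmic improvement ultimately powering~\refT{thm:fnd}.
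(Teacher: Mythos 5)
Your high-level plan is sound up through the independence-number reduction: run the semi-random triangle-free process to density $\rho\asymp\sqrt{\beta\log n/n}$, establish degree concentration and prune to get $G_n$ on $\ge n/3$ vertices with degrees $\Theta(\sqrt{\beta n\log n})$, observe that $\alpha(G_n)=O(\log n/\rho)=O(\sqrt{n\log n})$ so any induced bipartite $F$ has both parts of size $O(\sqrt{n\log n})$, and handle parts of size $\le\biC\log n$ trivially. (The paper reaches the degree bounds more coarsely, from inter-set edge counts and per-step neighbourhood increments rather than per-vertex DE trajectories, but either route works.)

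The gap is in your concentration step for the remaining bipartitions. You propose to track $|N_{G_n}(v)\cap S|$ for each pair $(v,S)$ individually, get an exponential Freedman tail for $\{|N_{G_n}(v)\cap S|\ge\biC\log n\}$, and union bound. This cannot close: at the critical scale $|S|\asymp\sqrt{n\log n}$ the mean of $|N_{G_n}(v)\cap S|$ is $|S|\rho\asymp\log n$, so the target event is only a constant-factor deviation and Freedman yields at best probability $n^{-O(1)}$; yet the number of pairs with $|S|=s$ is $n\binom{n}{s}=\exp\bigl(\Omega(s\log(n/s))\bigr)=\exp\bigl(\Omega(\sqrt{n}(\log n)^{3/2})\bigr)$. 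The two are off by a superpolynomial factor. Note also that the ``extra $\sqrt{\log n}$'' does not help a per-vertex bound: the mean degree into a maximum independent set is $\Theta(\log n)$ with or without the density boost, so your per-vertex exponent never improves. And pre-committing to a family of candidate sets would need to cover all near-maximum independent sets, which can still be superpolynomially many. The fix is to aggregate over both parts: show that for \emph{all} disjoint $A,B$ with $1\le|A|=|B|\le 2s$ (with $s=\lceil D\log n/\rho\rceil$) the total edge count satisfies $|T_I(A,B)|\le(1+\delta)2s|B|\rho$. The mean is $\Theta(|A||B|\rho)$, which at $|A|=2s$ is $\Theta(D|B|\log n)$, so choosing $D=D(\delta)$ large makes the Chernoff exponent $\ge 12|B|\log n$ and defeats the $n^{2|B|}$ union bound; the paper implements this via stochastic domination by a binomial using the open-pair upper bounds maintained along the process. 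From this edge bound and $\alpha(G_n)\le 2s$ one gets $\delta(F)\le e_F(A,B)/\max\{|A|,|B|\}\le(1+\delta)2s\rho=O(\log n)$ by averaging. Since the estimate is proved for all small disjoint pairs (not just independent ones), the ``$S$ is independent is a random event'' subtlety you flag simply disappears.
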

We defer the proof of~\refT{thm:semi} to \refS{sec:gnd}: it is based on a careful refinement\footnote{In the proof of \refT{thm:semi} we mainly use the semi-random variant instead of the standard triangle-free process~\cite{bohman2009triangle,pontiveros2013triangle,bohman2013dynamic} for technical convenience, since in~\refS{sec:gnd} this leads to conceptually simpler proofs of certain pseudo-random properties (i.e., where the necessary refinements of existing proofs require less technical work for the semi-random variant).}  
of the semi-random triangle-free process analysis of Guo and Warnke~\cite{guo2020packing}. 
Using~\refT{thm:semi} we shall in fact establish improved bounds for the more general parameter~$g(n,d)$, 
which denotes the maximum~$g$ such that every $n$-vertex triangle-free graph with minimum degree at least~$d$ contains an induced bipartite subgraph with minimum degree at least~$g$. 
Extending~\cite{kwan2020dense,van2020bipartite}, the following result establishes a phase transition of~$g(n,d)$ when the minimum degree~$d$ is around~$\sqrt{n \log n}$, 
and it also implies \refT{thm:fnd} since~$f_\eta(n)=g(n,n^\eta)$.
\begin{theorem}\label{thm:gnd}
For any fixed~$\gamma \in (0,1)$, we have $g(n,d)=\Theta\big(\max\{\log d,d^2/n\}\big)$ for all~$n^{\gamma}\le d\le n/2$.
\end{theorem}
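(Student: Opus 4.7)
The plan is to establish matching upper and lower bounds, with a natural split at the threshold $d_\star \asymp \sqrt{n \log n}$ where the two terms $\log d$ and $d^2/n$ cross (up to constants, using $d \ge n^\gamma$ to get $\log n = \Theta_\gamma(\log d)$). For the lower bound $g(n,d) \ge c\max\{\log d, d^2/n\}$, both halves can essentially be read off from prior work: the $\Omega(\log d)$ piece is implicit in the Lov\'asz-Local-Lemma-based argument of Esperet, Kang and Thomass\'{e}~\cite{esperet2019separation}, while the $\Omega(d^2/n)$ piece is established in the dense-regime analyses of~\cite{kwan2020dense,van2020bipartite}, and both extend without substantive change to the full range $n^\gamma \le d \le n/2$.

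The bulk of the new work lies in the upper bound $g(n,d) \le C\max\{\log d, d^2/n\}$, for which we must exhibit an $n$-vertex triangle-free graph with $\delta \ge d$ and small induced-bipartite minimum degree. We split into two cases. When $d \le d_\star$, we apply \refT{thm:semi} with $\beta := d^2/(\lbc^2 n\log n)$; this choice is valid since it makes $\beta < 1/14$, and it ensures $\lbc\sqrt{\beta n\log n} \ge d$. The output is a triangle-free graph $G'$ with $v(G') \in [n/3, n]$ vertices, $\delta(G') \ge d$, and every induced bipartite subgraph of minimum degree at most $\biC \log n$, which in this regime matches the target $O(\max\{\log d, d^2/n\})$. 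To pad up to exactly $n$ vertices we take a disjoint union of a few such graphs; this preserves triangle-freeness, the bound $\delta \ge d$, and the induced-bipartite bound, since any induced bipartite subgraph of a disjoint union decomposes component-wise with the same vertex-degrees. When $d > d_\star$, we fall back on the binomial random graph construction from~\cite{kwan2020dense,van2020bipartite}: take $G(n,p)$ with $p = \Theta(d/n)$, delete one edge from each triangle, and check using standard Chernoff and union-bound arguments that $\delta \ge d$ whp and that any induced bipartite subgraph $A\cup B$ has $|B| = O((\log n)/p)$ (since $B$ is independent in $G(n,p)$), which in turn bounds each vertex-degree in the subgraph by $O(p|B|) = O(\log n) = O(d^2/n)$.

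The main obstacle, and the conceptual novelty of the proof, lies in the first case of the upper bound: earlier $G(n,p)$-based constructions only reach minimum degree $\Theta(\sqrt{n})$ in triangle-free graphs, which leaves the logarithmic gap present in~\cite{kwan2020dense,van2020bipartite} for $\sqrt{n} \lesssim d \lesssim \sqrt{n\log n}$. \refT{thm:semi} closes this gap by providing pseudo-random triangle-free graphs with an extra $\sqrt{\log n}$ factor in the minimum degree, via a careful analysis of the semi-random triangle-free process; once it is in hand, the deduction of \refT{thm:gnd} reduces to the bookkeeping described above.
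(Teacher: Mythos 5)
There are two genuine gaps, both traceable to how you invoke \refT{thm:semi}. For $\sqrt{n}\le d\lesssim\sqrt{n\log n}$ you propose $\beta:=d^2/(c^2 n\log n)$, but this tends to zero (roughly like $1/\log n$) when $d$ is near $\sqrt{n}$, whereas \refT{thm:semi} only applies for $n\ge n_0(\beta)$ with $n_0$ depending on $\beta$; concretely, the semi-random process runs for about $n^\beta$ steps, and once $\beta\lesssim 1/\log n$ that is $O(1)$ steps, so the dynamic-concentration arguments behind \refT{thm:semi} break and the theorem cannot be invoked as a black box. The paper instead keeps $\beta=10^{-2}$ \emph{fixed} and shrinks the vertex count: it applies \refT{thm:semi} to $n'=\lceil\alpha d^2/\log n\rceil$ vertices (so that $\delta(G_{n'})\approx\sqrt{n'\log n'}\approx d$), takes $\lfloor n/n'\rfloor$ disjoint copies, and then ``blows up'' each vertex into an independent set of size between $1$ and $6$ to land on exactly $n$ vertices. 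The blow-up preserves triangle-freeness, only increases degrees, and respects the per-copy induced-bipartite bound, so $\delta\ge d$ and $\delta(F)=O(\log n')=O(\log d)$ survive; your proposal to ``pad up by taking a disjoint union of a few such graphs'' does not land on $n$ vertices and does not address this.

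For $\sqrt{n\log n}\lesssim d\le n^{2/3}$ your fall-back also has a gap: deleting one edge per triangle in $G(n,p)$ with $p=\Theta(d/n)$ cannot preserve $\delta\ge d$, since a typical vertex lies in about $\binom{d}{2}p\approx d^3/n\gg d$ triangles once $d\gg\sqrt{n}$ (indeed the triangle count $\approx d^3$ exceeds the edge count $\approx nd$ in this regime). And simply citing~\cite{kwan2020dense,van2020bipartite} does not help either, since as the paper notes those works only determine $g(n,d)$ up to a logarithmic factor when $\sqrt{n}<d\le n^{2/3}$ --- this range is precisely what \refT{thm:semi} is needed for. The paper handles it by again invoking \refT{thm:semi} with a smaller vertex count $n'=\lfloor\alpha(n/d)^2\log n\rfloor\ll n$ and blowing up each vertex by a factor $\lfloor n/n'\rfloor$, giving $\delta(H_n)\approx(n/n')\sqrt{n'\log n'}\gtrsim d$ and induced-bipartite minimum degree $\lesssim(n/n')\log n'\asymp d^2/n$. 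Both subcases therefore run through \refT{thm:semi} with a variable \emph{vertex count} and the blow-up trick, rather than with a variable $\beta$ or a $G(n,p)$ fallback.
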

Similar to~$f_\eta(n)=g(n,n^\eta)$, 
the cases~${n^{\gamma} \le d \le \sqrt{n}}$ and~${n^{2/3} \le d \le n/2}$ of \refT{thm:gnd} follow from~\cite{kwan2020dense}.
Furthermore, for~${\sqrt{n} \le d \le n^{2/3}}$ we obtain~$g(n,d)={\Omega(\max\{\log d,d^2/n\})}$ by combining~\mbox{\cite[Theorem~1.3]{kwan2020dense}} 
with the fact that~$g(n,d)$ is monotone increasing in~$d$. 
We now close the gap for~${\sqrt{n} \le d \le n^{2/3}}$ 
by mimicking the upper bound constructions from~\cite{kwan2020dense,van2020bipartite} 
using the semi-random triangle-free process based graphs~$G_n$ from \refT{thm:semi},
which have better degree properties than the~$G(n,p)$ based graphs used in~\cite{kwan2020dense,van2020bipartite}. 
\begin{proof}[Proof of \refT{thm:gnd} based on~\refT{thm:semi}]
Writing~$c,C'>0$ for the constants of~\refT{thm:semi}, let~${\beta:=10^{-2}}$ and~${A:=c\sqrt{\beta}/3}$. 
As discussed, it suffices to prove $g(n,d)={O(\max\{\log d, d^2/n\})}$ 
for~${\sqrt{n} \le d \le n^{2/3}}$. 
To this end we may always assume that~$n$ is sufficiently large (whenever necessary), as usual.  

We start with the case~${\sqrt{n} \le d\le A \sqrt{n\log n}}$, 
where we set~$\alpha := {2/(\lbc^2 \beta)}$ and~${n':=\lceil \alpha d^2/\log n\rceil}$. 
Note that~${n^{2/3} \ll n' \le \lceil{\alpha A^2 n}\rceil \le n/2}$ and~$n' \ll d^2$. 
By taking the disjoint union of~$\lfloor n/n'\rfloor$ copies of~$G_{n'}$, 
we obtain a triangle-free graph~$H_n$ with $v(H_n) = \lfloor n/n'\rfloor \cdot v(G_{n'}) \in [n/6,n]$ vertices and minimum degree
\begin{align*}
\delta(H_n) = \delta(G_{n'}) \ge c \sqrt{\beta n' \log n'} \ge \sqrt{c^2\beta \alpha \cdot  d^2 \cdot 2/3} > d. 
\end{align*}
Furthermore, every induced bipartite subgraph~$F \subseteq H_n$ is a disjoint union of induced bipartite subgraphs from copies of $G_{n'}$ and thus has minimum degree at most~$\delta(F) \le \biC\log n' \le 2 \biC \log d$.
By `blowing up' each vertex of~$H_n$ into an independent set of suitable sizes between one and six
(i.e., after replacing each vertex of~$H_n$ by an independent set, we add a complete bipartite graph between every pair of independent sets that correspond to an edge in~$H_n$),  
we thus obtain an $n$-vertex triangle-free graph~$G_{n,d}$ with~${\delta(G_{n,d}) \ge \delta(H_n) \ge d}$, 
where furthermore every induced bipartite subgraph~$F \subseteq G_{n,d}$ has minimum degree at most~${\delta(F) \le 6 \cdot 2\biC \log d}$ (by analogous disjoint reasoning as before), 
establishing that~$g(n,d)=O(\log d)$. 

Finally, in the remaining case~${A \sqrt{n\log n} \le d\le n^{2/3}}$ 
we set~$\alpha := {\lbc^2 \beta/18}$ and~${n':=\lfloor \alpha (n/d)^2\log n\rfloor}$. 
Note that~${n^{2/3} \ll n' \le \alpha n/A^2 \le n/2}$. 
By `blowing up' each vertex of~$G_{n'}$ into an independent set of size~$\lfloor n/n'\rfloor$, 
we obtain a triangle-free graph~$H_n$ with $v(H_n) = \lfloor n/n'\rfloor \cdot v(G_{n'}) \in [n/6,n]$ vertices  
and minimum degree 
\begin{align*}
\delta(H_n) = \lfloor n/n'\rfloor \cdot \delta(G_{n'})\ge \frac{n}{2n'}\cdot \lbc\sqrt{\beta n'\log n'} 
\ge \sqrt{\frac{\lbc^2\beta n^2 \log(n^{2/3})}{4n'}} 
\ge \sqrt{\frac{\lbc^2 \beta \cdot d^2 \cdot 2/3}{4\alpha}} > d .
\end{align*}
Furthermore, every induced bipartite subgraph $F \subseteq H_n$ has minimum degree at most $\delta(F) \le \lfloor n/n'\rfloor \cdot  \biC \log n' \le 2\alpha^{-1}\biC \cdot d^2/n$. 
By blowing up each vertex of~$H_n$ into an independent set of suitable sizes between one and six, 
we then obtain an $n$-vertex triangle-free graph~$G_{n,d}$ that establishes~$g(n,d) = O(d^2/n)$. 
\end{proof}

\subsubsection{Construction of pseudo-random $r$-AP free sets of integers}\label{sec:techn:vdW}
%
%Recall that~$W(r,k)$ denotes the smallest positive integer~$n$ such that every red and blue coloring of numbers in~$[n]:=\{1,2,\dots, n\}$ contains either a monochromatic red $r$-AP or a monochromatic blue $k$-AP. 
To prove the lower bound on the van der Waerden number~$W(r,k)$ claimed by \refT{thm:van}, 
our strategy is to construct a large subset~$I \subseteq [n]$ of the integers that is $r$-AP free and pseudo-random, 
where pseudo-randomness will intuitively ensure that $[n] \setminus I$ is $k$-AP free for fairly large~$k=k(n)$. 
For technical reasons, it will be convenient to work with the field $\mathbb{Z}/N\mathbb{Z}$ for a prime number~$N$, 
where a set of numbers $\{a_1,\dots, a_r\}\subseteq~\mathbb{Z}/N\mathbb{Z}$ is formally called an $r$-term arithmetic progression ($r$-AP) in~$\mathbb{Z}/N\mathbb{Z}$ if~$|\{a_1,\dots,a_r\}|=r$ and $a_i\equiv_N a_1+(i-1)d$ for some~$d \not\equiv_N 0$. 
Following the conceptual punchline of this paper, we shall construct the desired pseudo-random $r$-AP free subset $I\subseteq \mathbb{Z}/N\mathbb{Z}$ using the so-called random greedy $r$-AP free process, 
which is a randomized greedy algorithm that step-by-step adds more random numbers to~$I$ without creating an $r$-AP, see~\refS{sec:van} for the full details. 
This algorithmic approach to the probabilistic method is key for obtaining our improved lower bound on~$W(r,k)$ via the following result, 
since earlier approaches based on random subsets the integers % (using the Lov\'asz Local Lemma) 
were only able to prove \refT{thm:AP} with the weaker parameter choice~$k = \Theta(N^{1/(r-1)}\log N)$, see~\cite{li2010lower}. 
Note that we recover these earlier results, by observing that the set~$I \subseteq \mathbb{Z}/N\mathbb{Z}$ from \refT{thm:AP} 
also satisfies~$|I\cap K|\ge 1$ for all $k$-APs~$K$ in~$\mathbb{Z}/N\mathbb{Z}$ of size~$k \ge k_N$.  
\begin{theorem}\label{thm:AP}
For any fixed $r\ge 3$, there are constants~$C,N_0>0$ such that the following holds for any prime number $N \ge N_0$. 
There exists a set~$I \subseteq \mathbb{Z}/N\mathbb{Z}$ which (i)~is $r$-AP free in~$\mathbb{Z}/N\mathbb{Z}$ 
and (ii)~satisfies~$|I\cap K|\ge 1$ for all $k$-APs~$K$ in~$\mathbb{Z}/N\mathbb{Z}$ of size~$k=k_N := \ceil{C(N/\log N)^{1/(r-1)}\log N}$. 
\end{theorem}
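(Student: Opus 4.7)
The plan is to construct~$I$ using the \emph{random greedy $r$-AP free process} on~$\mathbb{Z}/N\mathbb{Z}$: starting from~$I_0=\emptyset$, at each step~$t\ge 1$ pick~$v_t$ uniformly at random from the set~$A_{t-1}$ of vertices whose addition to~$I_{t-1}$ does not create an $r$-AP in~$\mathbb{Z}/N\mathbb{Z}$, and set~$I_t:=I_{t-1}\cup\{v_t\}$. Let~$m := \bigceil{c_r (N/\log N)^{(r-2)/(r-1)}\log N}$ for a small constant~$c_r=c_r(r)>0$ to be chosen later, stop the process after~$m$ steps, and set~$I:=I_m$. By construction~$I$ is $r$-AP free in~$\mathbb{Z}/N\mathbb{Z}$, so property~(i) holds automatically.

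For property~(ii), since there are at most~$N^2$ $k$-APs in~$\mathbb{Z}/N\mathbb{Z}$, it suffices by a union bound to show, for each fixed $k$-AP~$K\subseteq\mathbb{Z}/N\mathbb{Z}$, that
\[
\P[I_m\cap K = \emptyset] \;\le\; N^{-3}.
\]
The key pseudo-random tracking estimate will be: with probability at least~$1-N^{-4}$, at every step~$0\le t\le m$ we have~$|A_t|\ge 1$ (so the process survives) and~$|K\cap A_t|\ge k/2$. Granting this, a conditional one-step analysis (at each step~$t$ with~$K\cap I_t=\emptyset$, the conditional probability of adding some element of~$K$ equals~$|K\cap A_t|/|A_t| \ge (k/2)/N$) together with $1-x\le e^{-x}$ yields
\[
\P[I_m\cap K = \emptyset] \;\le\; N^{-4} + \exp\bigpar{-mk/(2N)}.
\]
A direct calculation from the definitions of~$m$ and~$k$ gives $mk/N = (1+o(1))\, c_r C\log N$, and so choosing~$C$ sufficiently large relative to~$1/c_r$ --- concretely with $c_r C\ge 8$ --- makes the right-hand side at most~$2N^{-4}\le N^{-3}$ for all large~$N$, as required.

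The main obstacle is proving the pseudo-random tracking estimate, i.e., that at most~$k/2$ elements of~$K$ get ``blocked'' (rendered unavailable) by time~$m$. My plan is to analyze the random greedy $r$-AP free process via the differential equation method, in the spirit of Bennett--Bohman, Wormald, and the semi-random triangle-free process analysis in Guo--Warnke~\cite{guo2020packing}. Concretely, for each~$v\in K$ one tracks the number of \emph{threats} to~$v$ --- i.e., $(r-1)$-subsets~$S\subseteq I_t$ such that~$S\cup\{v\}$ forms an $r$-AP --- and shows via Freedman/Bennett-type martingale concentration that this threat count stays close to its deterministic trajectory, which remains bounded by a small constant when~$t\le c_r m^*$ provided~$c_r$ is a sufficiently small constant depending on~$r$ (here $m^*:=N^{(r-2)/(r-1)}(\log N)^{1/(r-1)}$ is the natural scale of the process). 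This implies that~$\P[v\text{ blocked by time }m]$ is small for each~$v\in K$, and a concentration argument for~$|K\setminus A_m|$ then completes the proof. A delicate point is that the arithmetic structure of~$K$ can correlate blocking events among different elements of~$K$ --- for instance, a single $r$-AP could contain multiple elements of~$K$ --- but such degenerate configurations contribute only lower-order terms that can be absorbed into the expected threat trajectory.
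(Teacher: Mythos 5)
Your overall strategy matches the paper's: run the random greedy $r$-AP free process for $m=\Theta\bigl(N^{(r-2)/(r-1)}(\log N)^{1/(r-1)}\bigr)$ steps, union bound over the $\le N^2$ many $k$-APs, and for each $K$ estimate the per-step probability of hitting $K\cap A_t$. Your computation $mk/N=(1+o(1))c_rC\log N$ is also correct. But your key pseudo-random tracking claim --- that with high probability $|K\cap A_t|\ge k/2$ for all $0\le t\le m$ --- is false, and the ``threat count stays bounded by a small constant'' claim you sketch to justify it is false for the same reason. Heuristically $|A_t|\approx Nq(t_i)$ and $|K\cap A_t|\approx kq(t_i)$ with $q(t)=e^{-t^{r-1}}$ and $t_i=i/M$, $M=\Theta(N^{(r-2)/(r-1)})$; at $i=m$ one has $t_m=c_r(\log N)^{1/(r-1)}$, so $q(t_m)=N^{-c_r^{r-1}}$ is polynomially small, and $|K\cap A_m|$ is a vanishing fraction of $k$, not at least $k/2$. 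Equivalently, the expected threat count for a fixed $v\in K$ at time $m$ is of order $D\,(m/N)^{r-1}=\Theta(c_r^{r-1}\log N)\to\infty$, so almost all of $K$ gets blocked by time $m$. No choice of small $c_r$ rescues the claim, because making $t_m$ bounded (so that $q(t_m)=\Theta(1)$) forces $mk/N=O(1)$, which destroys the union bound.

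The fix --- and this is what the paper does --- is to avoid bounding $|K\cap A_t|$ from below by a constant fraction of $k$ and $|A_t|$ from above trivially by $N$. Instead, track both $|K\cap A_t|$ and $|A_t|$ against the \emph{same} deterministic decay profile $q(t)$: show $|K\cap A_t|=(1\pm o(1))kq(t_i)$ (this is the new dynamic-concentration estimate proved by a supermartingale/differential-equation argument) and combine it with the Bennett--Bohman estimate $|A_t|=(1\pm o(1))Nq(t_i)$. The crucial point is that the $q(t_i)$ factors cancel in the ratio, yielding $|K\cap A_t|/|A_t|\ge k/(4N)$ for all $t\le m$ even though both numerator and denominator individually shrink polynomially. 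Your intended martingale analysis of $|K\cap A_t|$ is the right tool and can be carried out; it should be aimed at proving the relative estimate $|K\cap A_t|\approx kq(t_i)$ rather than the false absolute lower bound $|K\cap A_t|\ge k/2$.
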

\begin{proof}[Proof of \refT{thm:van} based on~\refT{thm:AP}]
%Since the inequality ${W(2,k) \ge k}$ is trivial, we may henceforth assume~${r \ge 3}$. 
Assuming that~$k$ is sufficiently large (as we may),  
we pick the largest prime number~$N \ge \max\{2,N_0\}$ satisfying~$k \ge k_N =\ceil{C(N/\log N)^{1/(r-1)}\log N}$. 
Using Bertrand's postulate it follows that~${n :=N-1}= {\Theta(k^{r-1}/(\log k)^{r-2})}$. 
For~${I \subseteq \mathbb{Z}/N\mathbb{Z}}$ as given by \refT{thm:AP}, % and viewed as a subset of~$[N]$, 
we color~$I \cap [n]$~red and $[n] \setminus I$~blue. 
Properties~\mbox{(i)--(ii)} of \refT{thm:AP} and~$k \ge k_N $
ensure that this coloring contains no red \mbox{$r$-APs} or blue~\mbox{$k$-APs} in~$[n]$, 
since any AP in~$[n]$ corresponds to an AP in~$\mathbb{Z}/N\mathbb{Z}$ 
(and any blue~\mbox{$k$-AP} contains all numbers of at least one blue~\mbox{$k_N$-AP}). 
It follows that~${W(r,k) > n} = \Theta(k^{r-1}/(\log k)^{r-2})$.  
\end{proof}
We defer the proof of \refT{thm:AP} to \refS{sec:van}: 
it is based on the differential equation method and results of Bohman and Bennett~\cite{bennett2016note} for the so-called random greedy independent set algorithm. 
Noteworthily, in our analysis we need to ensure that all of the polynomially many $k$-APs are `hit' by the set~$I$ produced by the $r$-AP process. 
% (there are at most~$N^2$ many), 
This is in great contrast to the analysis of the $H$-free process arising in graph Ramsey theory, 
where one typically needs to ensure that an exponential number of substructures are hit~\cite{bohman2009triangle,bohman2010early,pontiveros2013triangle,bohman2013dynamic,Warnke2014,Warnke2014when,Picollelli2014final}.

\section{Semi-random triangle-free process}\label{sec:gnd}
In this section we prove \refT{thm:semi} by showing that a semi-random variant of the so-called triangle-free process typically finds a triangle-free graph $G_n \subseteq K_n$ with the desired properties.
Intuitively, this process starts with an empty graph, and then iteratively adds a large number of carefully chosen edges (instead of just adding a single edge as in the original triangle-free process) such that the resulting graph stays triangle-free.

\subsection{More details and heuristics}\label{subsec:basicssemi-random}
The formal details of the semi-random triangle-free process given in~\mbox{\cite[Section~2]{guo2020packing}} are rather involved, 
so here we shall only introduce those aspects that are important for the upcoming arguments of this paper 
(keeping the notation from~\cite{guo2020packing} to minimize notational differences). 
%deferring the full details to~\mbox{\cite[Section~2]{guo2020packing}}. 
%
The semi-random process starts with 
\begin{equation}\label{eq:def:initial}
E_0=T_0:=\emptyset \quad \text{ and } \quad O_0:=E(K_n)  ,
\end{equation}
and the rough plan is to step-by-step build up 
a `random' set of edges~$E_i \subseteq E(K_n)$, 
a triangle-free edge subset~$T_i \subseteq E_i$, 
and a set of `open' edges~$O_i \subseteq E(K_n) \setminus E_i$, each of which can still be added to $E_i$ without creating triangles. 
More precisely, in step $i+1 \ge 1$ of the semi-random triangle-free process 
we sample a random edge subset~$\Gamma_{i+1} \subseteq O_i$, 
where each edge~$e \in O_i$ is included independently with probability 
\begin{equation}\label{eq:def:probp}
p:=\sigma/\sqrt{n} \quad \text{ for } \quad  \sigma:=(\log n)^{-2}, 
\end{equation}
and update the random set of edges by~setting 
\begin{equation}\label{def:Ei1}
E_{i+1}:=E_i\cup\Gamma_{i+1}. 
\end{equation}
To determine the new triangle-free edge subset~$T_{i+1} \subseteq T_i \cup\Gamma_{i+1}$, 
the idea is to delete a suitable set~$D_{i+1} \subseteq \Gamma_{i+1}$  of edges from~$\Gamma_{i+1}$ with $|\Gamma_{i+1} \setminus D_{i+1}| \approx |\Gamma_{i+1}|$, 
such~that 
\begin{equation}\label{def:Ti1}
T_{i+1} := T_i \cup \bigl(\Gamma_{i+1} \setminus D_{i+1}\bigr) 
\end{equation}
remains triangle-free, see~\cite[(13)--(14) in~Section~2.1]{guo2020packing} for the precise definition of~$D_{i+1}$ 
(this construction intuitively works since only few new triangles are created in~$E_i\cup\Gamma_{i+1}$ due to the fact that $\Gamma_{i+1}$ is fairly small).  
To determine the new open edge set~$O_{i+1} \subseteq O_i \setminus \Gamma_{i+1}$, 
we certainly have to remove the set~$C'_{i+1}$ of `newly closed' edges, which simply contains all edges~$e \in O_i$ that form a triangle with some two edges of~$E_{i+1}=E_i\cup\Gamma_{i+1}$.
For technical reason we also remove an extra random edge subset~$S_{i+1} \subseteq O_i$ and set 
\begin{equation}\label{def:Oi:subset}
O_{i+1} := O_i \setminus \bigl(\Gamma_{i+1} \cup C'_{i+1} \cup S_{i+1}\bigr) ,
%\subseteq \big\{ e \in E(K_n) \setminus E_{i+1}: \:  \text{$e$ does not form a triangle with any two edges of $E_{i+1}$} \big\} .
\end{equation}
see~\cite[(15)--(20) in~Section~2.1]{guo2020packing} for the precise definition of~$C'_{i+1} \cup S_{i+1}$ 
(the removal of extra edges is a technical twist that intuitively makes it easier to prove certain concentration statements).

Stopping this iterative %semi-random 
construction after~$I \approx n^{\beta}$ steps, 
the pseudo-random intuition from~\mbox{\cite[Section~2]{guo2020packing}} 
suggests that, with respect to various edge statistics, the resulting $n$-vertex triangle-free graph
\begin{equation}\label{eq:def:I}
H:=\bigl([n], T_I\bigr) \quad \text{ with } \quad I:=\big\lceil n^\beta\big\rceil 
\end{equation}
heuristically resembles a binomial random graph~$G(n,\rho)$ with edge~probability
\begin{equation}\label{eq:def:rho}
\rho := \sqrt{\beta (\log n)/n} ,
\end{equation}
but with the notable exception that it by construction contains no triangles  
(such a random graph would typically contain many triangles). 
This heuristic makes it plausible that~${G_n= H}$ satisfies the degree properties claimed by~\refT{thm:semi}, 
since routine arguments show that the random graph~$G(n,\rho)$ typically has these degree properties. 
To keep the modifications of~\cite{guo2020packing} minimal, 
we shall in fact find an induced subgraph~${G_n \subseteq H}$ with the desired degree properties 
(this extra step is convenient but not necessary, see~\refR{rem:degree}).

\subsection{Setup and proof of \refT{thm:semi}}\label{subsec:analysisoftrianglefree}
We now turn to the technical details of our proof of \refT{thm:semi}, which extends~\cite[Sections~2--3]{guo2020packing}. 
Here our setup is guided by the pseudo-random heuristic discussed in~\cite[Section~2.2]{guo2020packing}, 
which loosely suggests that 
\begin{equation}\label{eq:intuition}
\cPr(e\in E_i)\approx \pi_i/\sqrt{n}
 \quad\text{ and }\quad 
\cPr(e\in O_i )\approx q_i,
\end{equation}
where the parameters~$\pi_i$ and~$q_i$ defined in~\cite[Section~2.3]{guo2020packing} 
satisfy the technical properties 
\begin{equation}
\label{eq:relationpiI}
\pi_i:=\sigma +\sum_{0 \le j < i}\sigma q_j,  
\quad 
0 < q_i \le 1 = q_0
\quad \text{ and } \quad 
\pi_I/\sqrt{n} = (1+o(1)) \rho,  %\sqrt{\beta \log n},
\end{equation}
see~\cite[Section~2.3 and Lemma~17]{guo2020packing} for the full details,  
which formally justify that we may indeed use the parameters~$\pi_i$ and~$q_i$ 
and their properties~\eqref{eq:relationpiI} in this paper 
(in contrast to the heuristic approximations~\eqref{eq:intuition}, which we of course may not use in our proofs). 
In particular, to get a handle on the number of edges between large sets of vertices, 
consistent with~\eqref{eq:intuition}--\eqref{eq:relationpiI} we introduce the pseudo-random~events 
\begin{align}
\label{eq:eventTI}
	\cT^*_I &:= \Big\{|T_I(A,B)| \ge (1-\delta)|A||B| \rho \text{ for all disjoint $A, B\subseteq [n]$ with $|A|=|B|=s $}   \Big\} ,\\
	\label{eq:evencTI}
	\cT^+_I & := \Big\{|T_I(A,B)|\le (1+\delta) 2s |B| \rho \text{ for all disjoint $A, B\subseteq [n]$ with $1 \le |A|=|B| \le 2s $} \Big\},  
\end{align}
where we write $S(A,B):=\{ab\in S: a\in A, b\in B  \}$ for the set of edges from $S$ that go between $A$ and $B$,
and use the carefully chosen (see~\cite[Section~2.3 and Theorem~9]{guo2020packing}) size parameter
\begin{equation}\label{eq:def:sS}
s:= \bigl\lceil D (\log n)/\rho \bigr\rceil \quad \text{ with } \quad D:= 108/\delta^2 \quad \text{and} \quad \delta := 1/10 .
\end{equation}
To eventually get a handle on the maximum degree, 
we also introduce the auxiliary~event 
\begin{align}
	%\label{eq:eventDI}
	%\cD_I & := \Big\{|N_{T_I}(v)| \le (2+\delta)n \rho \text{ for all $v \in [n]$}  \Big\}, \\
	\label{eq:eventNi}
	\mathcal{N}_{\le I} &:= \Big\{|N_{\Gamma_{i}}(v)|\le 2\sigma q_{i-1}\sqrt{n} \text{ for all $v\in [n]$ and $0 < i \le I$}  \Big\} ,
\end{align}
writing~$N_S(v):=\{ w\in [n]:vw\in S \}$ for the set of neighbors of~$v$ in a given edge set~$S$.

Results of Guo and Warnke, see~\cite[Theorem~9]{guo2020packing}, imply that
\begin{equation}\label{eq:probTX}
	\P\bigpar{\cT^*_I\cap \mathcal{N}_{\le I}} \ge 1-n^{-\omega(1)} . 
\end{equation}
As we shall show next, \refT{thm:semi} then follows from the claim
\begin{equation}\label{eq:probDTX:goal}
	\cPr\bigpar{\cT^+_I}  \ge 1-o(1) ,
\end{equation}
whose stochastic domination based proof we defer to~\refS{sec:semirandom}. 
\begin{proof}[Proof of \refT{thm:semi} assuming inequality~\eqref{eq:probDTX:goal}]
Combining~\eqref{eq:probTX}--\eqref{eq:probDTX:goal} we infer that~$\P(\cT^*_I \cap \mathcal{N}_{\le I} \cap \cT^+_I) > 0$ for all sufficiently large~$n$, 
so by the probabilistic method we may henceforth fix a graph~${H=([n], T_I)}$ for which the event~${\cT^*_I \cap \mathcal{N}_{\le I} \cap \cT^+_I}$ holds.
We then construct the induced triangle-free subgraph~${G_n \subseteq H}$ by iteratively deleting vertices of degree at most $\delta/4 \cdot n\rho$, 
and now verify that it has the claimed properties, starting with the degree bound~\eqref{eq:maxmindegree}.
Noting $e_H(A,B)=|T_I(A,B)|$, the event~$\cT^*_I$ implies, via a double-counting argument for $e(H)$ analogous to the proof of~\cite[Theorem~5]{guo2020packing}, that the number of edges of~$H$ is at~least
\begin{gather}\label{eq:H:edges}
e(H) = \frac{\sum_{A \subseteq [n]: |A|=s} \sum_{B \subseteq [n] \setminus A: |B|=s}|T_I(A,B)|}{2\binom{n-2}{s-1}\binom{n-s-1}{s-1}}
\ge \frac{\binom{n}{s}\binom{n-s}{s} \cdot (1-\delta)s^2\rho}{2\binom{n-2}{s-1}\binom{n-s-1}{s-1}} = (1-\delta)\tbinom{n}{2}\rho. 
\end{gather}
Furthermore, by the recursive definition~\eqref{def:Ti1} of the edge set~${T_I \subseteq \bigcup_{0 \le i < I}\Gamma_{i+1}}$, 
using the properties~\eqref{eq:relationpiI} of~$\pi_i$ we infer, for all sufficiently large~$n \ge n_0(\beta)$, that the event~$\mathcal{N}_{\le I}$ implies the maximum degree bound
\begin{equation*}  
\Delta(H) = \max_{v\in [n]} |N_{T_I}(v)| \le \max_{v\in [n]} \sum_{0 \le i < I}|N_{\Gamma_{i+1}}(v)|\le \sum_{0 \le i < I}2\sigma q_i\sqrt{n} \le 2\pi_I\sqrt{n} \le (2+\delta) \rho n .
\end{equation*} 
By construction of~$G_n \subseteq H$, 
using $\delta=1/10$ we thus infer, for all sufficiently large~$n \ge n_0(\beta)$, that 
\[
v(G_n) \ge \frac{2 e(G_n)}{\Delta(G_n)} \ge \frac{2 \bigl[e(H) - n \cdot \delta/4 \cdot n\rho\bigr]}{\Delta(H)} \ge \frac{2(1-2\delta)\binom{n}{2}\rho}{(2+\delta)n\rho} > \frac{n}{3} ,
\]
and so the claimed degree bound~\eqref{eq:maxmindegree} follows with~$\lbc := \delta/4$ and~$\ubC := 2+\delta$. 

Next, suppose that~$F \subseteq G_n$ is an induced bipartite subgraph with two parts~$A$ and~$B$, where we may assume that~$|A| \ge |B| \ge 1$. 
Since~${F \subseteq G_n}$ and $G_n \subseteq H$ are both induced subgraphs, we~have
\[
e_F(A,B)=e_{G_n}(A,B)=e_H(A,B)=|T_I(A,B)|.
\]
Furthermore, since~$A$ and~$B$ are both independent sets in~$F$, we have~${|B|\le |A| \le \alpha(F)} \le {\alpha(H) \le 2 \cdot s}$, 
where the last inequality holds because the event~$\cT_I^*$ implies 
that in~$H$ there is at least one edge between any two disjoint $s$-vertex sets. 
Using a double counting argument similar to~\eqref{eq:H:edges}, % or the proof of~\cite[Theorem~5]{guo2020packing}, 
the event~$\cT^+_I$ then implies~that 
\[ |T_I(A,B)| = \frac{\sum_{A'\subseteq A:|A'|=|B|}|T_I(A',B)|}{\binom{|A|-1}{|B|-1}}\le \frac{\binom{|A|}{|B|} \cdot (1+\delta) 2s |B|\rho}{\binom{|A|-1}{|B|-1}}=(1+\delta) 2s |A|\rho.\]
The definitions~\eqref{eq:def:sS} of~${s \approx D (\log n)/\rho}$ and~$\delta=1/10$ give~${(1+\delta) 2s\rho \le 3D \log n}$ for sufficiently large~$n$. 
By averaging it follows that~${\delta(F) \le e_F(A,B)/|A| \le 3D \log n}$, 
completing the proof with~$\biC:=3D$. 
\end{proof}
\begin{remark}\label{rem:degree}
For any~$\delta>0$ one can in fact show that the minimum and maximum degree of the $n$-vertex graph~${H=([n], T_I)}$  
satisfy ${(1-\delta) n\rho \le \delta(H) \le \Delta (H) \le (1+\delta) n\rho}$ with high probability
(by adapting~\cite[Sections~3.1--3.5]{guo2020packing}), 
which would allow us to directly use~$G_n=H$ in the above proof of~\refT{thm:semi}. 
However, the coarser bounds used above suffice for our purposes, 
and require less technical modifications of~\cite{guo2020packing}. 
\end{remark}

\subsection{Pseudo-randomness: deferred proof of inequality~\eqref{eq:probDTX:goal}}\label{sec:semirandom} % Analysis of the algorithm
This subsection is devoted to the deferred proof of inequality~\eqref{eq:probDTX:goal}, i.e., $\cPr(\cT^+_I)  \ge 1-o(1)$.  
To this end we shall adapt the strategy from~\cite[Sections 3.4--3.5]{guo2020packing} to our setting, 
i.e., use estimates on the number of open edges~$|O_i(A,B)|$ to eventually get a handle on the total number of added edges~$|T_I(A,B)|$. 

Turning to the details, let~$\cS$ denote the set of all pairs of vertex disjoint $A,B \subseteq [n]$ with $1 \le |A|=|B| \le 2 s$.
To keep the changes to~\cite{guo2020packing} minimal, for each pair~$(A,B) \in \cS$ we enlarge~$A$ to~$A^+$ by adding the lexicographic first~$2s-|A|$ vertices from~$[n] \setminus (A \cup B)$. 
Note that the vertex set~$A^+$ is determined by~$A$. 
Consistent with the heuristic approximations~\eqref{eq:intuition}, we then introduce the `open edges' related pseudo-random~events
\begin{equation}\label{def:Qpi}
\ctQ_i^+:=\Bigl\{ \bigabs{O_i(A^+, B)} \le q_i|A^+||B| \text{ for all $(A, B)\in \cS$} \Bigr\} 
\quad \text{ and } \quad 
\ctQ^+_{\le I}:=\bigcap_{0 \le i \le I}\tilde{\mathcal{Q}}^+_i.
\end{equation}
Note that~$1 \le |B| \le |A^+|=2s$ for all pairs~$(A,B) \in \cS$. 
Furthermore, there are at most~$n^{2j}$ pairs~$(A,B) \in \cS$ with~$|B|=j$. 
With these two key properties in mind, the proof of~\cite[Lemma~24]{guo2020packing} carries over to the pairs~$(A^+,B)$ virtually unchanged 
(that proof merely exploits that~$|A|$ is large, and only uses~$|A|=|B|$ to control the final union bound estimate over all pairs~$(A,B)$ of vertex subsets), 
giving 
\begin{equation*}%\label{eq:probDTX:Q0}
\max_{0 \le i < I}\cPr\bigl(\neg \ctQ^+_{i+1} \: \mid \: \ctQ^+_{\le i} \cap \mathfrak{X}_{\le i} \bigr) \: \le \: \sum_{(A,B) \in \cS} n^{-\omega(|B|)} \le \sum_{1 \le j \le 2s}n^{2j-\omega(j)} \le n^{-\omega(1)},
\end{equation*}
where~$\mathfrak{X}_{\le i}$ is a `good' event determined by~$(O_j,E_j,T_j,\Gamma_j,S_j)_{0 \le j \le i}$ that is formally defined in~\cite[Section~2.4]{guo2020packing};
here we shall only use that the event~$\mathfrak{X}_{\le i+1}$ implies~$\mathfrak{X}_{\le i}$, and that~$\cPr(\neg \mathfrak{X}_{\le I}) \le n^{-\omega(1)}$ by~\cite[Theorem~9]{guo2020packing}.
In view of~$q_0=1$, see~\eqref{eq:relationpiI}, it is straightforward to check that the event~$\ctQ_0^+ = \ctQ_{\le 0}^+$ always holds. 
Since the event~$\mathfrak{X}_{\le I}$ implies~$\mathfrak{X}_{\le i}$ for all~$0 \le i < I$, 
using~$\ctQ^+_{\le i+1} = \ctQ^+_{i+1} \cap \ctQ^+_{\le i}$ and~$I \approx n^{\beta}$ it follows~that 
\begin{equation}\label{eq:probDTX:Q}
\begin{split}
    \cPr\bigl(\neg \ctQ^+_{\le I}\bigr) \: &\le \: \cPr\bigl(\neg \mathfrak{X}_{\le I}\bigr)+\cPr(\neg\ctQ^{+}_{i+1} \cap \ctQ^{+}_{\le i} \cap \mathfrak{X}_{\le i} \text{ for some $0\le i<I$} ) \\
&\le\: \cPr\bigl(\neg \mathfrak{X}_{\le I}\bigr) + \sum_{0 \le i <I} \cPr\bigl(\neg \ctQ^+_{i+1} \: \mid \: \ctQ^+_{\le i} \cap \mathfrak{X}_{\le i} \bigr) \: \le \: (I+1) \cdot n^{-\omega(1)} \le n^{-\omega(1)}.
\end{split}
\end{equation}

Turning to the total number of added edges~$|T_I(A,B)|$ for $(A,B)\in\cS$, 
using~$A \subseteq A^+$ and~$T_I \subseteq E_I$  
together with the recursive definition~\eqref{def:Ei1} of the edge set~$E_I = \bigcup_{0 \le i < I}\Gamma_{i+1}$,  
it follows~that 
\begin{equation}\label{eq:probDTX:edgesbound}
|T_I(A,B)| %\le \bigabs{T_I(A^+,B)} 
\le \bigabs{E_I(A^+,B)} 
= \sum_{0 \le i < I}\bigabs{O_i(A^+,B) \cap \Gamma_{i+1}} .
\end{equation}
Recall that the event~$\ctQ_i^+$ implies $|O_i(A^+,B)| \le q_i|A^+||B|$, 
and that~$\Gamma_{i+1} \subseteq O_i$ is the random subset where each edge $e \in O_i$ is included independently with probability~$p$. 
Combining these properties, by mimicking the stochastic domination arguments from the proof of~\cite[Claim~30]{guo2020packing} it then follows that 
\[
\cPr\bigpar{|E_I(A^+,B)| \ge t \text{  and  } \ctQ^+_{\le I}} \le \cPr\bigl(Z^+ \ge t\bigr) \quad \text{ with } \quad Z^+ \overset{\mathrm{d}}{=} \Bin\Bigpar{\sum_{0\le i< I}\bigfloor{q_i|A^+||B|}, \: p}.
\]
Using~$p=\sigma/\sqrt{n}$, the properties~\eqref{eq:relationpiI} of~$\pi_i$ and~$|A^+|=2s$, similar to~\cite[Section~3.5]{guo2020packing} we have 
\[
\mu^+ := \E Z^+ \sim \sum_{0 \le i < I}\sigma q_i/\sqrt{n} \cdot |A^+||B| = (\pi_I-\sigma)/\sqrt{n} \cdot |A^+||B| \sim 2 s|B|\rho  .
\] 
Using the definitions~\eqref{eq:def:sS} of the parameter~${s \approx D (\log n)/\rho}$ and the constant~${D = 108/\delta^2}$,
for sufficiently large~$n$ it follows that~$(1+\delta) 2s |B|\rho \ge (1+\delta/2)\mu^+$ and~$\delta^2 \mu^+/12 > 12 |B| \log n$, say. % similar to~\cite[(97)]{guo2020packing}.   
Similar to~\mbox{\cite[(97)--(98)]{guo2020packing}}, standard Chernoff bounds such as~\cite[Theorem~2.1]{JLR} thus routinely give
\[
\cPr\bigl(|E_I(A^+,B)| \ge (1+\delta) 2s |B|\rho \text{ and } \ctQ^+_{\le I}\bigr) \le \cPr\bigl(Z^+ \ge (1+\delta/2)\mu^+\bigr) 
\le \exp\bigl(-\delta^2\mu^+/12\bigr) % \le e^{-\delta^2\mu^+/4} 
\le n^{-12|B|}.
\]
Recalling that the vertex set~$A^+$ is determined by~$A$, and that there are at most most~$n^{2j}$ pairs~$(A,B) \in \cS$ with~$|B|=j$, 
in view of inequality~\eqref{eq:probDTX:edgesbound} it then follows via a standard union bound argument that 
\[
\cPr\bigl(\neg \cT^+_I \cap \ctQ^+_{\le I}\bigr) \le  \sum_{(A,B) \in \cS} n^{-12|B|} \le \sum_{1 \le j \le 2s}n^{2j-12j} = o\bigpar{n^{-9}},
\]
which together with~\eqref{eq:probDTX:Q} implies~$\cPr(\cT^+_I)  \ge 1-o(1)$. 
This completes the proof of inequality~\eqref{eq:probDTX:goal} and thus \refT{thm:semi}, as discussed. 
\noproof

\section{Random greedy $r$-AP free process}\label{sec:van}
In this section we prove \refT{thm:AP} by showing that the random greedy $r$-AP free process typically finds an $r$-AP free subset~$I \subseteq \mathbb{Z}/N\mathbb{Z}$ with the desired properties.
Intuitively, this process starts with an empty set~$I=\emptyset$, and then iteratively adds new random numbers from~$\mathbb{Z}/N\mathbb{Z}$ such that the resulting set~$I$ stays $r$-AP~free.
More formally, fixing~$r \ge 3$, the random greedy $r$-AP free process starts with 
\begin{equation}
I(0):=\emptyset \quad\text{ and }\quad \cV(0):=\mathbb{Z}/N\mathbb{Z} .
\end{equation}
Here $I(i)$ denotes the growing $r$-AP free set found after $i$ steps, 
and $\cV(i)$ denotes the set of `available' numbers in $\mathbb{Z}/N\mathbb{Z} \setminus I(i)$, i.e., that can be added to~$I(i)$ without creating an $r$-AP.
In step~$i+1 \ge 1$ of the random greedy $r$-AP free process, 
we then choose $x_{i+1}\in \cV(i)$ uniformly at random and update the $r$-AP free set and available set via  % set 
\begin{align}
I(i+1) & :=I(i)\cup\{ x_{i+1} \},\\
\cV(i+1) &:=\cV(i)\setminus \bigpar{\{\cv_{i+1}\}\cup \NI_{\cv_{i+1}}(i)},
\end{align}
where we write $\NI_{\cv_{i+1}}(i)$ for the set of numbers that become `unavailable' when $\cv_{i+1}$ is added, i.e., 
\begin{equation} %\label{eq:def:D2iv}
\NI_\cv(i):=\big\{\cu\in \cV(i) \setminus \{\cv\} \: : \: \text{there is $A\in \cA_{N,r}$ such that $\cv,\cu\in A$ and $A\setminus\{\cv,\cu\}\subseteq \cI(i)$}  \Big\} ,
\end{equation} 
in which~$\cA_{N,\ell}$ is a shorthand for the collection of all $\ell$-APs in~$\mathbb{Z}/N\mathbb{Z}$.

\subsection{Proof strategy}
In this subsection we discuss our proof strategy for \refT{thm:AP}. 
To this end, let us first record the basic observation that each number~$x\in\mathbb{Z}/N\mathbb{Z}$ is contained in~exactly 
\begin{equation}\label{eq:D}
D:=r|\cA_{N,r}|/N = \Theta(N)
\end{equation} 
many {$r$-APs}~${A\in\cA_{N,r}}$. 
Our strategy is then to analyze the random greedy $r$-AP free process for
\begin{equation}\label{eq:def:m}
	m:=\cxi \cdot N\cmD (\log N)^{\frac{1}{r-1}} 
\end{equation}
steps, and show that the $r$-AP free set $I:=I(m) \subseteq \mathbb{Z}/N\mathbb{Z}$ typically satisfies~$I\cap K\neq\emptyset$ for all $k$-APs $K\in\cA_{N,k}$ of size 
\begin{equation}\label{eq:def:k}
k:=9\cxi^{-1} \cdot (D/\log N)^{1/(r-1)}\log N = \Theta\bigpar{(N/\log N)^{1/(r-1)}\log N},
\end{equation}
deferring the choices of the sufficiently small constants~$0 < {\cxi,\delta < 1/(2r)}$. 
As usual, we are henceforth treating both~$m$ and~$k$ as integers (since rounding has an asymptotically negligible effect on our~arguments).

The outlined proof strategy is consistent with the pseudo-random heuristic that $I=I(m)$ resembles a random $m$-element subset of $\mathbb{Z}/N\mathbb{Z}$. 
Indeed, noting $k\tm=9N\log N$, this heuristic suggests that 
\[
\cPr\bigpar{I\cap K = \emptyset} 
\approx \frac{\binom{N-k}{m}}{\binom{N}{m}} 
= \prod_{0 \le j < k}\Bigpar{1-\frac{m}{N-j}} %\le \Bigpar{1-\frac{m}{N}}^k 
\le \exp\Bigpar{-\frac{k \tm}{N}} \ll N^{-2},
\]
which is small enough to employ a union bound argument over the at most $N^2$ many~$k$-APs~$K\in\cA_{N,k}$. 
In~\eqref{eq:IcapKnonempty} below and \refS{sec:dynamic} we will essentially make this heuristic reasoning rigorous, albeit in a slightly roundabout~way
(via several pseudo-random events and the differential equation method).

\subsection{Setup and proof of~\refT{thm:AP}}
We now turn to the technical details of our proof of \refT{thm:AP}, which require some setup. 
In order to relate the discrete steps of the process to continuous trajectories, 
we introduce the convenient scaling 
\begin{equation}
t_i :=i/\cM \quad \text{ with } \quad \cM:=N \cmD .
\end{equation}
To get a handle on all~$k$-APs $K\in \cA_{N,k}$, we denote the number of available numbers in $K$ by 
\begin{equation}
\cQ_K(i):=\cV(i)\cap K .
\end{equation}
Henceforth using the shorthand~$X=(a\pm b)x$ for~$X\in [(a-b)x,\: (a+b)x]$ to avoid clutter (as usual),
we then define~$\mathcal{K}_{\le j}$ as the pseudo-random event that
for all $0 \le i \le j$ we have 
\begin{equation}\label{eq:QKibound}
 |\cQ_K(i)|=  \big(1\pm \err(t_i)\big)kq(t_i) \qquad \text{for all $K\in \cA_{N,k}$,}
\end{equation}
and similarly define $\mathcal{S}_{\le j}$ as the pseudo-random event that for all $0 \le i \le j$ we have
\begin{equation}\label{eq:SiBound}
|\cV(i)| = \big(1\pm D^{-\delta}\big)Nq(t_i)
\quad \text{ and } \quad
\max_{\cv\in \cV(i)}\bigabs{|\NI_\cv(i)| - s_2(t_i)} \le D^{\frac{1}{r-1}-\delta},
\end{equation}
%where we defer the choice of the sufficiently small constant~${0 < \delta < 1/(2r)}$ and tacitly use the deterministic functions
using the deterministic functions
\begin{equation}\label{eq:APPrDefs}
q(t) :=e^{-t^{r-1}}, \quad  s_2(t) := (r-1)\cpD t^{r-2}q(t) \quad \text{ and } \quad  \err(t) := e^{5 (t+t^{r-1})} \cdot D^{-\delta}.
\end{equation}
Note that, by choosing ${\cxi=\cxi(r,\delta)>0}$ small enough   
% He: I just checked that they work. 
% The fact is that https://onlinelibrary.wiley.com/doi/10.1002/rsa.20667 involves more parameters than ours, and our setting can be extended to a collection of feasible parameters in both papers.
% To satisfy (13)-(18) in their paper, we can choose \eps = 1/(2(r-1)), \delta = \eps/5 (the \delta in their paper), \alpha = \beta =9r2^r, \xi as our current one.  
% Then their concentration bound will satisfy our new error relation in (15).
compared to~${r,\delta>0}$, 
we may assume that for all \mbox{steps}~${0 \le i \le m}$ we have ${0\le t_i \le t_m= m/ \cM=\cxi(\log N)^{\frac{1}{r-1}}}$ as well as  
\begin{equation}\label{eq:ratios}
0< D^{-\delta}\le  \err(t)=o(1) \quad \text{ and } \quad  0 \le t \le D^{o(1)} \quad\text{ for }\quad 0\le t\le t_m.
\end{equation}

Results of Bohman and Bennett (which require~$N$ to be prime), see~\cite[Section~4]{bennett2016note}, imply that for sufficiently\footnote{For example, the explicit choices~$\delta=1/(40r^2)$, $\cxi=\delta/500$ satisfy all constraints of this paper and~\cite{bennett2016note}.} small~${\cxi,\delta>0}$ we have  
\begin{equation}\label{eq:probSm}
\cPr(\neg\mathcal{S}_{\le \tm}) \le \exp\bigpar{-N^{\Omega(1)}}.
\end{equation}
Using estimates~\eqref{eq:SiBound} and~\eqref{eq:ratios}, 
we see that the event~$\mathcal{S}_{\le \tm}$ implies ${\min_{0 \le i \le m}|\cV(i)| = |\cV(m)| \ge Nq(t_m)/2 > 0}$ for all sufficiently large~$n$, 
which in turn ensures that the random greedy $r$-AP free process does not terminate before step~$m$ 
(since the process is always able to select a new number in each of the first~$m$ steps). 
As we shall show next, \refT{thm:AP} then follows easily from the claim 
\begin{equation}\label{eq:sumGm:bound}
\cPr(\neg \cGo_{\le \tm}) = o(1) \quad \text{ for } \quad \cGo_{\le i}:=\mathcal{S}_{\le i} \cap \mathcal{K}_{\le i} ,
\end{equation}
whose differential equation method based proof we defer to \refS{sec:dynamic}. 
\begin{proof}[Proof of~\refT{thm:AP} assuming inequality~\eqref{eq:sumGm:bound}]
For any $k$-AP~$K \in \cA_{N,k}$ in~$\mathbb{Z}/N\mathbb{Z}$,
whenever the event~$\cGo_{\le i}$ holds, 
by combining the concentration bounds~\eqref{eq:QKibound}--\eqref{eq:SiBound}   
with the error estimate~\eqref{eq:ratios} we infer that 
\begin{equation*}%\label{eq:stepprob}
\cPr(\cv_{i+1} \not\in \cV(i)\cap K\mid\mathcal{F}_i)=	1-\frac{|\cQ_K(i)|}{|\cV(i)|} \le 1-\frac{\frac{1}{2}kq(t_i)}{2Nq(t_i)}  =1-\frac{k}{4N},
\end{equation*}
where~$\mathcal{F}_i$ denotes the natural filtration associated with the process after~$i$ steps 
(which intuitively keeps track of the `history' of the process, i.e., all the information available up to and including step~$i$).
Since the event $\cGo_{\le \tm}$ implies the event~$\cGo_{\le i}$ for all $0 \le i \le \tm$, 
using~$km = 9 N \log N$ it routinely follows that 
\begin{equation}\label{eq:IcapKnonempty}
\begin{split}
\cPr\bigpar{I(m) \cap K =\emptyset \ \text{ and } \ \cGo_{\le \tm}}
\le \prod_{0\le i\le \tm-1}\Big(1-\frac{k}{4N}  \Big)\le \exp\Bigpar{-\frac{k \tm}{4N}} \ll N^{-2}.
\end{split}
\end{equation}
Taking a union bound over the at most $N^2$ many $k$-APs~$K \in \cA_{N,k}$ in~$\mathbb{Z}/N\mathbb{Z}$
then completes the proof of~\refT{thm:AP} with~$I:=I(m)$, 
since $\cPr(\neg\cGo_{\le\tm})=o(1)$ by the assumed inequality~\eqref{eq:sumGm:bound}. 
\end{proof}

\subsection{Dynamic concentration: deferred proof of inequality~\eqref{eq:sumGm:bound}}\label{sec:dynamic}
This subsection is devoted to the deferred proof of inequality~\eqref{eq:sumGm:bound}, i.e.,~$\cPr(\neg \cGo_{\le \tm}) = o(1)$, 
which in view of the probability estimate~\eqref{eq:probSm} 
and the definition of the event~$\mathcal{K}_{\le i}$ 
requires us to establish the dynamic concentration estimate~\eqref{eq:QKibound} for~$|\cQ_K(i)|$.
To this end, following the differential equation method approach to dynamic concentration~\cite{wormald1995,bohman2009triangle,warnke2019wormald}, 
for all~$k$-APs $K\in \cA_{N,k}$, steps~$0 \le i \le m$ and sign-parameters~$\sigma \in \{+,-\}$ 
 we introduce the auxiliary random variables
\begin{equation}\label{eq:def:X_ipm}
X_K^{\sigma}(i):=\sigma [|\cQ_K(i)|-kq(t_i)]-kq(t_i)\err(t_i) .
\end{equation}
The point is that the desired estimate~\eqref{eq:QKibound} follows when the inequalities $X_K^{+}(i)\le 0$ and $X_K^{-}(i)\le 0$ both hold. 
In the following we shall use supermartingale arguments to establish these inequalities, 
by analyzing the (expected and worst-case) one-step changes of~$X_K^{\sigma}(i)$ and~$|\cQ_K(i)|$.

\subsubsection{Expected one-step changes}\label{subsubsec:super}
We start by estimating the expected value of the one-step changes $\Delta \cQ_{K}(i):=|\cQ_K(i+1)|-|\cQ_K(i)|$ of the number of available numbers in any~$k$-AP~$K \in \cA_{N,k}$, 
assuming that $0 \le i < m$ and $\cGo_{\le i}$ hold.
Note that~$|\cQ_K(i)|$ is monotone decreasing. 
Furthermore, a number~$\cv \in \cQ_K(i)$ is removed from the set of available numbers %at step~$i+1$ 
if the process chooses a number~$\cv_{i+1}$ from~$\NI_{\cv}(i) \cup \{\cv\}$. 
Since~$\cv_{i+1} \in \cV(i)$ is chosen uniformly at random, 
using the estimates~\eqref{eq:QKibound}--\eqref{eq:SiBound} implied by~$\cGo_{\le i}$ it follows that 
\begin{equation}
\label{eq:expectedQKbound:0}
\E(\Delta \cQ_{K}(i)\mid\mathcal{F}_i)
= -\sum_{\cv\in \cQ_K(i)}\frac{|\NI_{\cv}(i)| \pm 1}{|\cV(i)|} = \frac{-[1\pm \err(t_i)]kq(t_i)\cdot\bigl[s_2(t_i)\pm 2D^{\frac{1}{r-1}-\delta}\bigr]}{[1\pm D^{-\delta}]Nq(t_i)} .
\end{equation}
%where~$\mathcal{F}_i$ denotes the natural filtration associated with the process after~$i$ steps, as before. 
Recalling that~$0<D^{-\delta}\le \err(t_i) = o(1)$ by~\eqref{eq:ratios} 
and that~$q(t) =e^{-t^{r-1}}$ by~\eqref{eq:APPrDefs}, 
using $s_2(t_i)/\cpD=(r-1) t_i^{r-2}q(t_i) = -q'(t_i)$ and $\cpD/N=1/M$ it follows that 
\begin{equation}
\begin{split}
\label{eq:expectedQKbound}
\E(\Delta \cQ_{K}(i)\mid\mathcal{F}_i)
=& -\Big(1\pm 4\err(t_i)\Big)\Big(s_2(t_i)\pm 2D^{\frac{1}{r-1}-\delta}\Big)\frac{k}{N}\\
=& \frac{kq'(t_i)}{ M}\pm \Big(4(r-1)t_i^{r-2}\cdot q(t_i) \err(t_i)+ 4D^{-\delta}\Big) \frac{k}{M}.
\end{split}
\end{equation}

In preparation for the upcoming supermartingale arguments, % in \refS{subsubsec:supermartingale}, 
we now show that the expected values of the one-step changes~$\Delta X^{\sigma}_K(i):= |X^{\sigma}_K(i+1)|-|X^{\sigma}_K(i)|$ of the auxiliary variables are negative, 
again assuming that $0 \le i < m$ and $\cGo_{\le i}$ hold. 
Set $\cf(t):=q(t)\err(t)$. 
Recalling the shorthand~$t_i=i/M$ and the definition~\eqref{eq:def:X_ipm} of~$X^{\sigma}_K(i)$, 	
by applying Taylor's theorem with remainder to the functions~$q$ and~$f$, it follows~that 
	\begin{equation}\label{eq:secondderivative}
	\begin{split}
\E( \Delta X^{\sigma}_K(i)\mid \mathcal{F}_i) 
& =  \sigma \Bigsqpar{\E(\Delta \cQ_{K}(i)\mid\mathcal{F}_i)-k \bigsqpar{q(t_{i+1})-q(t_i)}}- k \bigsqpar{f(t_{i+1})-f(t_i)}\\ 
& =   \sigma \biggsqpar{\E(\Delta \cQ_{K}(i)\mid\mathcal{F}_i)-\frac{kq'(t_i)}{M}}- \frac{k \cf'(t_i)}{M}+ O\biggpar{\max_{0\le t\le t_\tm} \frac{k\big(|q''(t)|+ |f''(t)|\big)}{M^2}}.
	\end{split}
	\end{equation}
Using~\eqref{eq:expectedQKbound} we see that in~\eqref{eq:secondderivative} the main $kq'(t_i)/M$ term cancels up to second order terms. 
In the following we shall show that the main error term $-k\cf'(t_i)/M$ is large enough to make the expected change~\eqref{eq:secondderivative} negative. % (as required for supermartingales). 
Indeed, noting~$f(t) = q(t)e(t)  \ge D^{-\delta}$, we have 
\[
f'(t_i) = \bigpar{5 + 4(r-1) t_i^{r-2}} q(t_i)e(t_i)  \ge 5D^{-\delta} + 4(r-1)t_i^{r-2} \cdot q(t_i)e(t_i)  .
\]
Furthermore, ${D=\Theta(N)}$ and ${\delta \le 1/(2r)}$ imply~${M=N/\cpD \gg D^{2\delta}}$. 
Recalling that~${q(t) \le 1}$, ${f(t) \le e(t) \ll 1}$ and~${0 \le t \le D^{o(1)}}$, see~\eqref{eq:ratios}, 
it then routinely follows~that 
\[
\frac{|q''(t)|+ |f''(t)|}{M} \le \frac{O\bigpar{\sum_{0 \le j \le 2r}t^j} \cdot \bigsqpar{q(t)+f(t)}}{M} \le \frac{D^{o(1)}}{D^{2\delta}} \ll D^{-\delta} .
\]
Inserting these estimates and~\eqref{eq:expectedQKbound} into the expected one-step changes~\eqref{eq:secondderivative} of~$X^{\sigma}_K(i)$, 
it follows that 
\begin{equation}\label{eq:supermartingale}
\E (\Delta X^{\sigma}_K(i)\mid\cF_i) \le - \bigpar{1-o(1)}kD^{-\delta}/M < 0 . 
\end{equation}

\subsubsection{Bounds on the one-step changes}\label{subsubsec:onestepchange}
We next bound the expected values of the one-step changes~$|\Delta\cQ_K(i)| = \bigabs{|\cQ_K(i+1)|-|\cQ_K(i)|}$, 
assuming that $0\le i < \tm$ and $\cGo_{\le i}$ hold.
Since~$|\cQ_K(i)|$ is step-wise decreasing, 
by combining~$s_2(t_i) \le r\cpD t_i^{r-2}$ with the first estimate of the expected one-step changes~\eqref{eq:expectedQKbound}, 
using~$\err(t_i)=o(1)$ and~$0 \le t_i \le D^{o(1)}$ it follows that 
\begin{equation}\label{eq:EDeltaQK}
\E (|\Delta \cQ_K(i)| \mid \cF_i) =-\E(\Delta \cQ_K(i)\mid\mathcal{F}_i) \le O\Bigpar{\cpD t_i^{r-2} +D^{\frac{1}{r-1}-\delta}} \cdot \frac{k}{N}
\ll kD^{\frac{1}{r-1}+\delta/2}/N . 
\end{equation}

Turning to the worst-case one-step changes of~$|\cQ_K(i)|$, we introduce the auxiliary event
	\begin{equation}\label{eq:d2ivinK}
\mathcal{N}_{\le j}:=\Big\{	\max_{\cv\in \cV(i)}|\NI_{\cv}(i)\cap K|\le D^{\frac{1}{r-1}-3\delta} \text{ for all $K\in \cA_{N,k}$ and $0 \le i \le j$}   \Big\}.
\end{equation}
Recalling the reasoning leading to~\eqref{eq:expectedQKbound:0}, the crux is that when~$\cN_{\le i}$ holds, then we have 
\begin{equation}\label{eq:absoluteQKbound}
	|\Delta \cQ_K(i)|\le 1+\max_{\cv\in \cV(i)}|\NI_{\cv}(i)\cap K|\le 2 D^{\frac{1}{r-1}-3\delta}.  
\end{equation}
We now claim that the auxiliary event $\mathcal{N}_{\le m}$ typically holds, i.e., more precisely that
\begin{equation}\label{eq:Nm}
\cPr\bigpar{\neg\mathcal{N}_{\le m} \text{ and } \mathcal{S}_{\le \tm}} \le \exp\bigpar{-N^{\Omega(1)}}.
\end{equation}
Turning to the proof details, with an eye on~$|\NI_{\cv}(i)\cap K|$ let 
\begin{equation}\label{eq:def:IKx}
\mathcal{I}=\mathcal{I}(K,\cv)  :=  \bigcpar{W \: : \: |W|=r-2,\; W\cup \{\cv,\cu\} \in\cA_{N,r}\text{ for some $\cu\in K$} } .
\end{equation}
Note that $|\mathcal{I}|\le kr^2$, as there are at most~$r^2$ many~$r$-APs containing two distinct numbers~$\{\cv,\cu\}$. 
Let 
	\begin{equation}\label{eq:NKv} 
	N_{K,\cv}:=\sum_{W\in\mathcal{I}}Y_W \quad \text{ with } \quad Y_W := \indic{W\subseteq I(m)\text{ and $\mathcal{S}_{\le \tm}$}} .
\end{equation}
Since $\{x\}\cup W$ contains $r-1 \ge 2$ elements, by similar reasoning as for $|\mathcal{I}|$ it follows that 
	\begin{equation}\label{eq:NKvUpper} 
	\max_{0 \le i \le m}|\NI_\cv(i)\cap K|\cdot\indic{\mathcal{S}_{\le \tm}}\le N_{K,\cv}\cdot r^2.
\end{equation}
We shall bound $N_{K,\cv}$ via the following Chernoff-type upper tail estimate for combinatorial random variables with `controlled dependencies', 
which is a convenient corollary of~\cite[Theorem~7 and Remarks~9--10]{warnke2020missing}. 
\begin{lemma}\label{lemma:UT}
	Let $(Y_{\alpha})_{\alpha \in \mathcal{I}}$ be a finite family of variables with $Y_{\alpha}\in [0,1]$ and 
	$\sum_{\alpha\in\mathcal{I}}\lambda_\alpha\le \mu$, where $(\lambda_\alpha)_{\alpha\in\mathcal{I}}$ satisfies
	$\E (\prod_{i\in [s]}Y_{\alpha_i})\le \prod_{i\in [s]}\lambda_{\alpha_i}$ for all $(\alpha_1,\dots,\alpha_s)\in\mathcal{I}^s$ with $\alpha_i\cap\alpha_j=\emptyset$ for~$i\neq j$. 
	Set~$Y := \sum_{\alpha\in\mathcal{I}}Y_\alpha$. 
	If 
	$\max_{\alpha \in \mathcal{I}}|\set{\beta \in \mathcal{I}: \alpha \cap \beta \neq \emptyset}| \leq C$,
then $\cPr(Y \ge z) \le (e\mu/z)^{z/C}$ for all $z > \mu$.
\end{lemma}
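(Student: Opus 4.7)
The plan is to deduce this lemma as a direct corollary of the Chernoff-type upper-tail inequality of Warnke~\cite{warnke2020missing}, specifically Theorem~7 together with Remarks~9--10 of that paper. Since the statement is explicitly flagged as a ``convenient corollary,'' the task is notational matching and a short conversion of the resulting bound rather than a fresh probabilistic argument.

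First, I would match the setups. The family $(Y_\alpha)_{\alpha\in\mathcal{I}}$ takes values in $[0,1]$; the product-moment hypothesis $\E\prod_{i\in[s]}Y_{\alpha_i}\le \prod_{i\in[s]}\lambda_{\alpha_i}$ over pairwise disjoint tuples $(\alpha_1,\dots,\alpha_s)\in\mathcal{I}^s$, combined with $\sum_\alpha\lambda_\alpha\le\mu$, realizes the ``$\lambda$-type weak independence'' assumption of~\cite[Theorem~7]{warnke2020missing}. The intersection graph on $\mathcal{I}$ defined by $\alpha\sim\beta \iff \alpha\cap\beta\neq\emptyset$ serves as the dependency graph, and its maximum degree is at most $C$ by assumption. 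Remark~9 of~\cite{warnke2020missing} ensures that the theorem, although stated for Bernoulli variables, extends to the $[0,1]$-valued setting here without any loss in the bound.

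Second, I would invoke Theorem~7 to obtain a Chernoff-type tail inequality of the form $\cPr(Y\ge z)\le \exp\bigpar{-(z/C)\,\varphi(z/\mu)}$, where $\varphi(x)=x\log x-x+1$ is the standard Cram\'er rate function for a Poisson variable of mean one. Using the elementary estimate $\varphi(x)\ge \log(x/e)$, valid for $x\ge 1$, the exponent is at least $(z/C)\log\bigpar{z/(e\mu)}$, which rearranges to the claimed bound $\cPr(Y\ge z)\le (e\mu/z)^{z/C}$ for all $z>\mu$. Remark~10 of~\cite{warnke2020missing} packages exactly this simplified logarithmic form, so that the specialization is essentially verbatim.

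The only mild obstacle I anticipate is bookkeeping of the multiplicative constant inside the logarithm (the ``$e$'' in $e\mu/z$), which depends on the precise normalization of the Cram\'er rate function used in~\cite[Theorem~7]{warnke2020missing}; this is routine to check against the cited remarks. No deeper technical difficulty is expected, since all real work has been done in the referenced paper.
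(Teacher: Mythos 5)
Your proposal takes essentially the same route as the paper: the paper itself gives no proof of Lemma~\ref{lemma:UT}, but simply states it as ``a convenient corollary of~\cite[Theorem~7 and Remarks~9--10]{warnke2020missing}'' and moves on, which is precisely what you do. Your added sketch of the conversion from the Cram\'er-type exponent $\varphi(x)=x\log x-x+1$ to the packaged form $(e\mu/z)^{z/C}$ is accurate in spirit (note that whether the prefactor in the intermediate bound is $z/C$ or the more conventional $\mu/C$, both rearrange to $(e\mu/z)^{z/C}$ once $z>\mu$, so the bookkeeping caveat you flag is harmless).
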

\noindent 
With an eye on~$\E N_{K,\cv}$, we first record the basic observation that when~$\mathcal{S}_{\le m}$ holds, 
then in every step~$i \le m$ there are at least~$|\cV(i)| \ge |\cV(m)| \gg ND^{-\delta/4}$ available numbers, say.  
For any set~$U$ of numbers from $\mathbb{Z}/N\mathbb{Z}$ a straightforward adaptation of the proof of~\cite[Lemma~4.1]{bohman2010early} 
(which proceeds by taking a union bound over all possible steps where the numbers of~$U$ could appear) 
then ensures that 
\begin{equation}\label{eq:def:P}
\cPr\bigpar{U\subseteq I(m) \text{ and }\mathcal{S}_{\le \tm}} \le m^{|U|} \cdot \Bigpar{\frac{1}{ND^{-\delta/4}}}^{|U|} \le \pi^{|U|} \quad \text{ with } \quad \pi := D^{-\frac{1}{r-1}+\delta/2},
\end{equation}
where we used that $m/N = \cmD (\log N)^{O(1)} \ll D^{-\frac{1}{r-1}+\delta/4}$, say. 
In particular, for any sequence of sets $(W_1,\dots,W_s)\in\mathcal{I}^s$ satisfying~$W_i\cap W_j=\emptyset$ for~$i\neq j$, 
using the definition~\eqref{eq:NKv} of~$Y_{W}$ and~\eqref{eq:def:P} it follows that  
\[
\E \Bigpar{\prod_{i\in [s]}Y_{W_i}}
%= \cPr\bigpar{W_1 \cup \cdots \cup W_s \subseteq I(m) \text{ and }\mathcal{S}_{\le \tm}} 
= \cPr\Bigpar{\bigcup_{i \in [s]} W_i \subseteq I(m) \text{ and }\mathcal{S}_{\le \tm}} 
\le \pi^{s(r-2)}=\prod_{i\in [s]}\lambda_{W_i}  \quad \text{ with } \quad \lambda_W:=\pi^{r-2}. \]
Furthermore, combining $|\mathcal{I}|\le kr^2$ with~\eqref{eq:def:P} and the definition~\eqref{eq:def:k} of~$k$, it also follows that
\[
	 \sum_{W\in\mathcal{I}}\lambda_W\le kr^2\cdot \pi^{r-2}=9\cxi^{-1}r^2(\log n)^{1-\frac{1}{r-1}}D^{\frac{3-r}{r-1}}D^{(r-2)\delta/2}\ll \cpD D^{-4\delta}=:\mu.  
\]
To estimate the associated $C$-parameter of \refL{lemma:UT}, note that any set~$W\in\mathcal{I}$ satisfies 
 \[ \bigabs{\{W'\in\mathcal{I} \: : \: W\cap W'\neq \emptyset  \}} \le \sum_{ w\in W }\sum_{A\in\cA_{N,r}:\{ x,w\}\subseteq A   }2^{|A|}\le  r\cdot r^2\cdot 2^r=:C.   \]
Using inequality~\eqref{eq:NKvUpper}, 
by invoking~\refL{lemma:UT} with $z:=\mu D^{\delta}/r^2\ge D^{\Omega(1)}$ it follows that 
\begin{equation}\label{eq:Nm:UB}
\cPr\Bigpar{\max_{0 \le i \le m}|\NI_x(i)\cap K| \ge \cpD D^{-3\delta} \text{ and } \mathcal{S}_{\le \tm}} 
\le \cPr(N_{K,\cv}\ge z)\le (e\mu/z)^{z/C}  %\le D^{-D^{\Omega(1)}} = 
\le \exp\bigpar{-N^{\Omega(1)}}.  
\end{equation}
Taking a union bound over all of the at most $N \cdot N^2 = N^{O(1)}$ possible pairs~$(x,K)$  
then establishes the claimed inequality~\eqref{eq:Nm}.

\subsubsection{Supermartingale arguments}\label{subsubsec:supermartingale} 
We are now ready to prove~$\cPr(\neg \cGo_{\le \tm}) \le \exp(-N^{\Omega(1)})$, by showing that $X_K^{\sigma}(i) \ge 0$ is extremely unlikely. 
Here our main probabilistic tool is the following supermartingale inequality~\cite[Lemma~19]{guo2020prague}, 
which allows us to exploit that~$X_K^{\sigma}(i)$ is defined~\eqref{eq:def:X_ipm} as the sum of a random variable and a deterministic function. 
\begin{lemma}\label{lem:supermartingale}
Let~$(S_i)_{i \ge 0}$ be a supermartingale adapted to the filtration $(\cF_i)_{i\ge 0}$. 
Assume that~${S_i=X_i+D_i}$, where $X_i$ is~$\cF_i$-measurable and $D_i$ is~$\cF_{\max\{i-1,0\}}$-measurable.
Writing ${\Delta X_i:=X_{i+1}-X_i}$, assume that ${\max_{i \ge 0}|\Delta X_i| \le C}$ and ${\sum_{i \ge 0} \E(|\Delta X_i| \mid \cF_{i}) \le V}$.  
Then, for all~$z >0$, 
\begin{equation}
\label{eq:superm}
\cPr\bigpar{S_i \ge S_0 + z \text{ for some $i \ge 0$}}
\: \le \:
\exp \biggpar{-\frac{z^2}{2C(V+z)}}. 
\end{equation}
\end{lemma}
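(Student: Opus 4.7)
The plan is to reduce the claim to a standard Bennett/Freedman concentration inequality for a closely related martingale, obtained by subtracting the predictable part of the increments of $S_i$. The key point is that the decomposition $S_i=X_i+D_i$, combined with the supermartingale hypothesis and the $\cF_{\max\{i-1,0\}}$-measurability of $D_i$, lets us convert the upper tail of $S$ into the upper tail of an honest martingale.

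More concretely, since $\Delta D_i:=D_{i+1}-D_i$ is $\cF_i$-measurable, one has $0\ge \E(\Delta S_i\mid\cF_i)=\E(\Delta X_i\mid\cF_i)+\Delta D_i$, so $\Delta D_i\le -\E(\Delta X_i\mid\cF_i)$. Writing $\Delta M_i:=\Delta X_i-\E(\Delta X_i\mid\cF_i)$ and $M_j:=\sum_{0\le i<j}\Delta M_i$, summation then gives $S_j-S_0\le M_j$ for every $j\ge 0$, and $(M_j)$ is a genuine $(\cF_j)$-martingale. Hence it suffices to prove the stated bound for $\cPr(\sup_{j\ge 0} M_j\ge z)$.

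To do so, I would set up an exponential supermartingale. The hypothesis $|\Delta X_i|\le C$ yields $|\Delta M_i|\le 2C$ together with the crucial one-sided variance estimate
\[
\E\bigpar{(\Delta M_i)^2\mid\cF_i}\le \E\bigpar{(\Delta X_i)^2\mid\cF_i}\le C\cdot \E\bigpar{|\Delta X_i|\mid\cF_i},
\]
so that $\sum_i\E((\Delta M_i)^2\mid\cF_i)\le CV$ deterministically. A standard Bennett-type bound (using $e^x-1-x\le \tfrac{x^2}{2(1-x/3)}$ on $0<x<3$) then shows that
\[
L_j:=\exp\biggpar{\lambda M_j - \frac{\lambda^2}{2(1-2\lambda C/3)}\sum_{i<j}\E\bigpar{(\Delta M_i)^2\mid\cF_i}}
\]
is a nonnegative $(\cF_j)$-supermartingale with $L_0=1$, for any $0<\lambda<3/(2C)$. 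Applying optional stopping at the first time $\tau$ with $M_\tau\ge z$, together with the deterministic bound on the quadratic-variation sum and Markov's inequality, would yield
\[
\cPr(\tau<\infty)\le \exp\Bigpar{-\lambda z+\frac{\lambda^2 CV}{2(1-2\lambda C/3)}},
\]
and optimising in $\lambda$ (for instance $\lambda=z/(CV+2Cz/3)$) produces the sharp estimate $\exp(-z^2/(2CV+4Cz/3))$, which implies the stated $\exp(-z^2/(2C(V+z)))$ since $CV+2Cz/3\le C(V+z)$.

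The main obstacle I anticipate is constant-tracking in the exponential moment step. The centering operation $\Delta X_i\mapsto \Delta M_i$ in the reduction to a martingale inflates the worst-case increment from $C$ to $2C$, and a careless exponential-moment calculation then loses factors of two in the final exponent. The redeeming feature is that the quadratic-variation sum is still controlled by $CV$ rather than $2CV$, because the variance estimate uses the one-sided quantity $C\E(|\Delta X_i|\mid\cF_i)$; the slack between $CV+2Cz/3$ and $C(V+z)$ then absorbs the remaining factor-of-two loss coming from the $2C$ increment, giving precisely the claimed bound. Residual care is also needed to justify optional stopping at $\tau$, but this is routine because the deterministic cap $V$ ensures $\sum_i|\Delta X_i|<\infty$ almost surely, so that $M_j$ converges and $L_{j\wedge\tau}$ is uniformly bounded on $\{\tau<\infty\}$.
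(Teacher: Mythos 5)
The paper itself does not prove this lemma; it is imported verbatim as \cite[Lemma~19]{guo2020prague}, so I am assessing your argument on its own merits. Your proof is correct. The reduction is exactly the right move: because $D_{i+1}$ is $\cF_i$-measurable, the supermartingale property gives $\Delta D_i\le -\E(\Delta X_i\mid\cF_i)$, and hence $S_j-S_0\le M_j$ for the centered martingale $M_j=\sum_{i<j}\bigl(\Delta X_i-\E(\Delta X_i\mid\cF_i)\bigr)$. The worst-case cap $|\Delta M_i|\le 2C$ together with the predictable-variance bound $\sum_i\E((\Delta M_i)^2\mid\cF_i)\le C\sum_i\E(|\Delta X_i|\mid\cF_i)\le CV$ then feeds cleanly into the Freedman exponential supermartingale, and the optimization at $\lambda=z/(CV+2Cz/3)$ does give $\exp(-z^2/(2CV+4Cz/3))\le \exp(-z^2/(2C(V+z)))$, so the slack in the target bound absorbs the factor-of-two inflation from centering exactly as you observe.

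Two small points of exposition. First, you invoke $e^x-1-x\le x^2/(2(1-x/3))$ ``on $0<x<3$'' but apply it to $x=\lambda\Delta M_i$, which can be negative; the actual justification is that $x\mapsto (e^x-1-x)/x^2$ is increasing, so for $x\le x_0:=2\lambda C<3$ one has $e^x-1-x\le x^2/(2(1-x_0/3))$, which is what produces the constant $1-2\lambda C/3$. Second, the optional-stopping step is cleanest phrased as $\cPr(\tau\le j)\exp\bigl(\lambda z-\tfrac{\lambda^2 CV}{2(1-2\lambda C/3)}\bigr)\le\E L_{\tau\wedge j}\le 1$ for every finite $j$, then $j\to\infty$; no uniform integrability of $L$ is needed since $L$ is a nonnegative supermartingale.

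Finally, a slightly sharper (and arguably simpler) route avoids the centering and its factor of two: since $\Delta D_i\le -\E(\Delta X_i\mid\cF_i)$ is $\cF_i$-measurable, one can bound the exponential moment of $\Delta S_i$ directly via
\[
\E\bigl(e^{\lambda\Delta S_i}\mid\cF_i\bigr)\le e^{-\lambda\E(\Delta X_i\mid\cF_i)}\,\E\bigl(e^{\lambda\Delta X_i}\mid\cF_i\bigr)\le \exp\Bigpar{\frac{e^{\lambda C}-1-\lambda C}{C}\,\E\bigl(|\Delta X_i|\mid\cF_i\bigr)},
\]
using the one-sided cap $\Delta X_i\le C$ and $1+a\le e^a$. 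Multiplying over $i$ and applying Markov and optional stopping gives $\exp(-z^2/(2CV+2Cz/3))$, slightly stronger than your intermediate bound. But since both dominate $\exp(-z^2/(2C(V+z)))$, there is no gap in your argument.
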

Turning to the details, we define the stopping time~$T$ as the minimum of~$\tm$ and the first step $i\ge 0$ where the `good' event~$\cGo_{\le i} \cap \mathcal{N}_{\le i}$ fails. 
For brevity, set $i\wedge T:=\min\{i, T\}$.
Recalling the definition~\eqref{eq:sumGm:bound} of the event~$\cGo_{\le \tm}=\mathcal{S}_{\le m} \cap \mathcal{K}_{\le m}$, 
by the discussion below~\eqref{eq:def:X_ipm} it follows that
\begin{equation}\label{eq:sumGm}
\begin{split}
\cPr(\neg \cGo_{\le \tm}) \: 
&=\: \cPr(\neg\mathcal{S}_{\le m} \cup \neg\mathcal{N}_{\le m}) +\cPr(\neg\mathcal{K}_{\le m} \cap \mathcal{S}_{\le m} \cap  \mathcal{N}_{\le m})\\
\: &\le \: \cPr(\neg\mathcal{S}_{\le m}) + \cPr(\neg\mathcal{N}_{\le m}  \text{ and }  \mathcal{S}_{\le m})
+\sum_{\sigma\in\{+,-\}}\sum_{K\in\cA_{N,k}} \cPr\Big(X^\sigma_K(i\wedge T)\ge 0 \text{ for some $i\ge 0$}\Big).
\end{split}
\end{equation}
For any~$K\in\cA_{N,k}$, we initially have $\cQ_K(0)=|K|=k$. 
By definition~\eqref{eq:def:X_ipm} of~$X^{\sigma}_K(i)$ we thus have 
\begin{equation}\label{eq:superm:init}
X^{\sigma}_K(0\wedge T)=X^{\sigma}_K(0) = \sigma[|\cQ_K(0)| - k] - ke(0) = - kD^{-\delta} . 
\end{equation}
Note that the estimates in Sections~\ref{subsubsec:super}--\ref{subsubsec:onestepchange} apply for $0\le i\le T-1$ (since then ${0\le i\le \tm-1}$ and~${\cGo_{\le i}\cap\mathcal{N}_{\le i}}$ hold), 
which in particular implies that the expected one-step changes of~$X^\sigma_K(i)$ satisfy~\eqref{eq:supermartingale}, 
and that the one-step changes~$|\Delta\cQ_K(i)|$ satisfy the worst case bound~\eqref{eq:absoluteQKbound} and the expectation bound~\eqref{eq:EDeltaQK}. 
Recalling~\eqref{eq:superm:init}, the stopped sequence $S_i:=X^\sigma_K(i\wedge T)$ thus is a supermartingale with $S_0=- kD^{-\delta}$, 
to which~\refL{lem:supermartingale} can be applied with $X_i=\sigma|\cQ_K(i\wedge T)|$, $C= O(D^{\frac{1}{r-1}-3\delta})$ and $V=m\cdot k D^{\frac{1}{r-1}+\delta/2}/N= O(k  D^{2\delta/3})$. 
Invoking inequality~\eqref{eq:superm} with $z=kD^{-\delta}$, 
using the definition~\eqref{eq:def:k} of~$k$ and~$D = \Theta(N)$ it follows that 
\begin{equation}\label{eq:superm:X}
\begin{split}
\cPr\Big(X^{\sigma}_K(i\wedge T)\ge 0\text{ for some $i\ge 0$}\Big)
%\le \exp\Big(-\frac{z^2}{2C(V+z)}\Big)
\le \exp\bigpar{-\Omega\bigpar{kD^{\delta/3}/D^{\frac{1}{r-1}}}}
\le \exp\bigpar{-N^{\Omega(1)}}.
\end{split}
\end{equation}
Inserting~\eqref{eq:superm:X} and $|\cA_{N,k}| \le N^{2}$ into~\eqref{eq:sumGm}, 
then $\cPr(\neg \cGo_{\le \tm}) \le \exp(-N^{\Omega(1)})$ follows from~\eqref{eq:probSm} and~\eqref{eq:Nm}, 
which completes the proof of inequality~\eqref{eq:sumGm:bound} and thus \refT{thm:AP}, as discussed. 
\noproof

\subsection{Generalization of \refT{thm:AP}}
We close by recording that a minor variant of our proof yields the following generalization of \refT{thm:AP}. 
\begin{theorem}\label{thm:AP:general}
For any fixed $r\ge 3$ and~$c>0$, there are constants~$C,N_0>0$ such that the following holds for any prime number~$N \ge N_0$, 
setting~$k =k_N:= \ceil{C(N/\log N)^{1/(r-1)}\log N}$. 
For any family~$\cB_{N,k}$ of $k$-element subsets of~$\mathbb{Z}/N\mathbb{Z}$ satisfying~$|\cB_{N,k}| \le N^{c}$ 
there exists a set~$I \subseteq \mathbb{Z}/N\mathbb{Z}$ which (i)~is $r$-AP free in~$\mathbb{Z}/N\mathbb{Z}$ 
and (ii)~satisfies~$|I\cap K|\ge 1$ for all~$K \in \cB_{N,k}$. 
\end{theorem}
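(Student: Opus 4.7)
The plan is to follow the proof of~\refT{thm:AP} almost verbatim, replacing the family~$\cA_{N,k}$ of~$k$-APs throughout by the given family~$\cB_{N,k}$ of arbitrary $k$-element subsets of~$\mathbb{Z}/N\mathbb{Z}$. The crucial observation to state and exploit at the outset is that the dynamic concentration analysis of~$\cQ_K(i)=\cV(i)\cap K$ in~\refS{sec:dynamic} only ever uses the cardinality~$|K|=k$ and never the AP structure of~$K$: the one-step-change estimate~\eqref{eq:expectedQKbound} is derived by summing~$|\NI_\cv(i)|/|\cV(i)|$ over~$\cv\in\cQ_K(i)$ and invoking the pseudo-random event~$\mathcal{S}_{\le i}$, so the computation treats~$K$ as a generic size-$k$ set, and the bound~$|\mathcal{I}(K,\cv)|\le kr^2$ from~\eqref{eq:def:IKx} uses only~$|K|=k$ since any two distinct elements of~$\mathbb{Z}/N\mathbb{Z}$ lie together in at most~$r^2$ many~$r$-APs.

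Given this, my plan is to run the same random greedy $r$-AP free process for~$m$ steps, redefine the pseudo-random events~$\mathcal{K}_{\le j}$ and~$\mathcal{N}_{\le j}$ by replacing~$\cA_{N,k}$ with~$\cB_{N,k}$, and transcribe the supermartingale arguments of~\refS{sec:dynamic} to this setting. The Chernoff-type estimate~\eqref{eq:Nm:UB} and the supermartingale bound~\eqref{eq:superm:X} both yield~$\exp(-N^{\Omega(1)})$ decay per~$K\in\cB_{N,k}$, which easily survives union bounds over the~$|\cB_{N,k}|\le N^c$ many sets and the~$N\cdot N^c$ many pairs~$(\cv,K)$ arising in the proof of~\eqref{eq:Nm}. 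The only step that genuinely needs rebalancing is the final union bound~\eqref{eq:IcapKnonempty}: previously the per-$K$ estimate~$\cPr(I(m)\cap K=\emptyset\text{ and }\cGo_{\le m})\le \exp(-km/(4N))$ gave~$\ll N^{-2}$ in order to beat~$|\cA_{N,k}|\le N^2$, whereas here we need to beat~$|\cB_{N,k}|\le N^c$, i.e.,~$km/(4N)\ge (c+2)\log N$. Since~$km=\Theta(\cxi C\cdot N\log N)$ with~$\cxi=\cxi(r)>0$ fixed from~\refS{sec:van} and~$C$ entering~$k$ linearly, this is achieved by taking~$C=C(r,c)$ sufficiently large; a union bound then establishes property~(ii) for~$I:=I(m)$ with probability~$1-o(1)$, while property~(i) is automatic from the $r$-AP-free construction of the process.

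The main (and only) obstacle is essentially bookkeeping: verifying that none of the arguments in Sections~\ref{subsubsec:super}--\ref{subsubsec:supermartingale} secretly exploit the arithmetic structure of $k$-APs (rather than just the cardinality~$k$), and checking that after enlarging~$C$ the~$\exp(-N^{\Omega(1)})$ decay rates still dominate the polynomial union bound over~$\cB_{N,k}$ and that the parameter constraints (such as the estimate~$\sum_{W\in\mathcal{I}}\lambda_W\ll \mu$ in the proof of~\eqref{eq:Nm}) are not broken by rescaling~$k$. Since~\refS{sec:van} already absorbs $N^{O(1)}$-type losses through its~$\exp(-N^{\Omega(1)})$ bounds, and the only polynomial-in-$N$ failure bound~\eqref{eq:IcapKnonempty} has a readily tunable exponent, no genuinely new ideas are required.
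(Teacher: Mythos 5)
Your proposal is correct and essentially identical to the paper's own proof-sketch: both observe that the dynamic concentration argument for inequality~\eqref{eq:sumGm:bound} never uses the AP structure of~$K$ (only $|K|=k$ and $|\cA_{N,k}|\le N^{O(1)}$), so it carries over to $\cB_{N,k}$ with the $\exp(-N^{\Omega(1)})$ bounds easily absorbing the $N^{O(1)}$ union bounds, and both then rescale the constant in the definition of~$k$ so that the estimate~\eqref{eq:IcapKnonempty} beats $N^{-c}$ rather than $N^{-2}$. The paper makes the tuning explicit (replacing the constant $9$ in~\eqref{eq:def:k} by $4c+1$), but that is the same move as your ``take $C=C(r,c)$ sufficiently large.''
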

\begin{proof}[Proof-Sketch]
Our differential equation method based proof of inequality~\eqref{eq:sumGm:bound} immediately carries over to~$\cB_{N,k}$, 
since besides~$|\cA_{N,k}| \le N^{O(1)}$ we did not use any AP-specific properties
(note that in the union bound arguments below~\eqref{eq:Nm:UB} and~\eqref{eq:superm:X} there is plenty of elbow room). 
Furthermore, after changing the constant~$9$ in the definition~\eqref{eq:def:k} of~$k$ to~$4c+1$, say, % (which in turn determines the constant~$C=C(c)$ mentioned above), 
we see that the probability estimate~\eqref{eq:IcapKnonempty} holds with~$N^{-2}$ replaced by~$N^{-c}$, 
which ensures that the union bound argument establishing \refT{thm:AP} carries over to~$\cB_{N,k}$. 
\end{proof}

\bigskip{\noindent\bf Acknowledgements.} 
We thank the referees for helpful suggestions.

\small

\normalsize

\end{document}